\def\congruent{\equiv}
\def\mod{\bmod}
\def\notmid{\centernot\mid}
\def\ratQ{\mathbb{Q}}
\def\scripta{ \mathfrak{a} } 
\def\scriptp{ \mathfrak{p} } 
\newtheorem{Theorem}{Theorem}
\newtheorem{Lemma}{Lemma}
\newtheorem*{Corollary}{Corollary to Lemma~\ref{Lem:MF}}
\def\Jacobi#1#2{{\left( {#1 \over #2} \right)}}
\def\Legendre#1#2{\left( {#1 \over #2} \right)}
\begin{document}

\begin{center}
{\large\bf 
The equation $\vert p^x \pm q^y \vert = c$ in nonnegative $x$, $y$. 
}

\bigskip

Reese Scott

Robert Styer 
\end{center}



revised 16 Dec 2011  

\bigskip

\begin{abstract}  
We improve earlier work on the title equation (where $p$ and $q$ are primes and $c$ is a positive integer) by allowing $x$ and $y$ to be zero as well as positive.    Earlier work on the title equation showed that, with listed exceptions, there are at most two solutions in positive integers $x$ and $y$, using elementary methods.  Here we show that, with listed exceptions, there are at most two solutions in nonnegative integers $x$ and $y$, but the proofs are dependent on nonelementary work of Mignotte, Bennett, Luca, and Szalay.  In order to provide some of our results with purely elementary proofs, we give short elementary proofs of the results of Luca, made possible by an elementary lemma which also has an application to the familiar equation $x^2 + C = y^n$.  We also give shorter simpler proofs of Szalay's results.  A summary of results on the number of solutions to the generalized Pillai equation $(-1)^u r a^x + (-1)^v s b^y = c$ is also given.  
\end{abstract}   

MSCN:{ 11D61 }

\bigskip

\section{Introduction}  

Earlier work (\cite{Be}, \cite{Le}, \cite{Sc}, \cite{ScSt}, \cite{ScSt2}) has treated the equation 
$$ (-1)^u a^x + (-1)^v b^y = c  $$
for integers $a>1$, $b>1$, $c>0$, with solutions $(x,y, u, v)$ where $u,v \in \{0,1\}$ and $x$ and $y$ are positive integers.  Recently, in treating the more general Pillai equation 
$$(-1)^u r a^x + (-1)^v s b^y = c  \eqno{({\rm P})}  $$
(where $r$ and $s$ are positive integers), the authors noticed that it is in a sense a more natural approach to allow $x$ and $y$ to be zero as well as positive; this is because analyzing (P) is greatly clarified by the use of what the authors in \cite{ScSt4} call {\it basic forms}, which require exponents equal to zero (see Lemma 1 of \cite{ScSt4} for a definition of basic form).  

So in this paper we improve earlier results (Theorems 3 and 5 of \cite{Sc} and Theorem 7 of \cite{ScSt}) by allowing the variables $x$ and $y$ in $(-1)^u a^x + (-1)^v b^y = c$ to be zero as well as positive, which significantly alters the nature of the proofs: while the proofs in \cite{Sc} and \cite{ScSt} are elementary, the proofs of Theorems 1, 2, and 3 below depend on non-elementary work of Luca~\cite{Lu} and  Szalay~\cite{Sz}, and the proof of Theorem 3 depends also on non-elementary results of Mignotte~\cite{Mi} and Bennett~\cite{Be}.  We have not been able to remove dependence on these non-elementary results, but we have been able to replace the proofs in \cite{Lu} and some of the proofs in \cite{Sz} by short elementary proofs, thus making Theorems 1 and 2 elementary.  (For this reason we state Theorem 2 and the nonelementary Theorem 3 separately even though Theorem 3 includes Theorem 2 except for the trivial case $p=q$.)  For the most part, we restrict the bases $a$ and $b$ to prime values, noting that it seems likely that the list of exceptional cases in Theorem 3 would remain unchanged even if composite values were allowed (see the discussion in Section 2).  We prove the following results:

\begin{Theorem}   
\label{Thm1}
For integers $b>1$ and $c>0$ and positive prime $a$, the equation
\begin{equation}   a^x - b^y = c   \label{1}  \end{equation}
has at most two solutions in nonnegative integers $(x,y)$, except for $(a,b,c) = (2,5,3)$, which has solutions $(x,y)=(2,0)$, $(3,1)$, $(7,3)$.  

There are an infinite number of $(a,b,c)$ for which (\ref{1}) has two solutions.  
\end{Theorem}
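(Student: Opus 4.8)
The plan is to separate off the contributions of the zero exponents from what is already known in the positive case. The cited results (Theorems~3 and~5 of \cite{Sc}, Theorem~7 of \cite{ScSt}) already bound by two the number of solutions of (\ref{1}) in \emph{positive} integers $x,y$, so I would first rule out $x=0$: the equation $1-b^{y}=c$ has no solution when $b\ge 2$ and $c>0$. Hence a genuinely new solution must have $y=0$, i.e.\ $a^{x}=c+1$, and since $a$ is prime this occurs for at most one exponent $x_{0}$ (precisely when $c+1$ is a power of $a$). So it suffices to prove: if $c=a^{x_{0}}-1$ for some $x_{0}\ge 1$, then (\ref{1}) has at most one solution in positive integers, the sole exception being $(a,b,c)=(2,5,3)$, for which there are the two positive solutions $(3,1)$ and $(7,3)$ in addition to $(2,0)$.

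For this reduced problem, write a positive solution as $a^{x}-b^{y}=a^{x_{0}}-1$. Since $x=x_{0}$ gives $b^{y}=1$ and $x<x_{0}$ is impossible by size, we must have $x>x_{0}$; setting $t=x-x_{0}\ge 1$ the equation becomes $a^{x_{0}}(a^{t}-1)=b^{y}-1$ in $t,y\ge 1$. I would then argue that this has at most one solution except in the exceptional configuration, and this is exactly the point at which the elementary arguments of \cite{Sc} and \cite{ScSt} no longer suffice: I expect to quote the results of Luca~\cite{Lu} and Szalay~\cite{Sz} (reproved elementarily elsewhere in the paper), splitting into cases according to the primes dividing $a$ and $b$. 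The exception surfaces from $a=2$, $x_{0}=2$ (so $c=3$): the equation $4(2^{t}-1)=b^{y}-1$ has the two solutions $t=1$ (giving $b^{y}=5$ and $(x,y)=(3,1)$) and $t=5$ (giving $b^{y}=125=5^{3}$ and $(x,y)=(7,3)$). I expect the main obstacle to be precisely proving that $(2,5,3)$ is the \emph{only} triple with $c=a^{x_{0}}-1$ admitting two positive solutions; this exhaustiveness statement is the ingredient that genuinely leans on the work of Luca and Szalay.

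For the last assertion I would exhibit an explicit infinite family. For every integer $n\ge 1$ take $(a,b,c)=(2,\,2^{n}+1,\,2^{n}-1)$: here $a=2$ is prime, $b\ge 3>1$, $c\ge 1>0$. Then $(x,y)=(n,0)$ satisfies $2^{n}-1=c$ and $(x,y)=(n+1,1)$ satisfies $2^{n+1}-(2^{n}+1)=2^{n}-1=c$, so (\ref{1}) has two distinct solutions; moreover these triples are pairwise distinct as $n$ varies since $c=2^{n}-1$ determines $n$. For $n\ne 2$ the triple is not the exception, so by the first part of the theorem these are in fact its only two solutions. (An equally short family, indexed directly by the prime, is $(a,b,c)=(a,\,a^{2}-a+1,\,a-1)$, with solutions $(1,0)$ and $(2,1)$.)
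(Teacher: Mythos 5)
Your reduction is sound and matches the paper's opening moves: no solution has $x=0$, at most one has $y=0$, and Theorem~3 of \cite{Sc} bounds the positive solutions by two, so the whole content of the theorem is to show that when $c=a^{x_0}-1$ there cannot be two further positive solutions outside the listed exceptions. Your infinite families for the last assertion are also correct (the paper's family $b=a^{x_2}-a^{x_1}+1$, $c=a^{x_1}-1$ contains both of yours). But the core step is not carried out: after writing $a^{x_0}(a^{t}-1)=b^{y}-1$ you say only that you ``would then argue'' via Luca and Szalay, and this is precisely where the proof lives. The missing idea is how to bring Luca's result into play at all: Lemma~\ref{L3} concerns $p^{r}-p^{s}+1=z^{2}$, so you must first know that one of the two hypothetical positive solutions has \emph{even} $y$. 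The paper gets this from the parity information in Theorem~3 of \cite{Sc} (two positive solutions $(x_2,y_2)$, $(x_3,y_3)$ outside its exceptional list satisfy $2\nmid y_2-y_3$), whence $b^{y_2}=a^{x_2}-c=a^{x_2}-a^{x_1}+1$ is a perfect square. Without that parity step your equation $a^{x_0}(a^{t}-1)=b^{y}-1$ does not connect to any of the quoted lemmas.

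Two further points you would need. First, Luca's lemma applies only to odd primes, and the paper disposes of $a=2$ not via Szalay but via Theorem~4 of \cite{Sc} (applied to the two positive solutions), which forces $a$ odd; so Szalay's results are in fact not needed for Theorem~1 at all, contrary to your expectation. Second, the exceptional cases must actually be chased down: Luca's two exceptions give $(a,b,c)=(3,5,2)$ and $(5,11,4)$, which are eliminated by a congruence modulo~3, and the five exceptional triples of Theorem~3 of \cite{Sc} must be checked for a $y=0$ solution, which is where $(2,5,3)$ emerges. Your guess that $(2,5,3)$ ``surfaces from $a=2$, $x_0=2$'' is right as to the source but you give no mechanism for certifying it is the only one.
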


\begin{Theorem}   
\label{Thm2} 
For positive primes $p$ and $q$ and positive integer $c$, the equation
\begin{equation} \vert p^x - q^y \vert = c \label{5} \end{equation}
has at most two solutions in nonnegative integers $x$ and $y$, except when $(p,q,c)$ or $(q,p,c) = (3,5,2)$, $(2,3,5)$, $(2,3,7)$, $(2, 11, 7)$, or $(2, F, F-2)$ where $F$ is a Fermat prime.  
\end{Theorem}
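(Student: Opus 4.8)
The plan is to reduce Theorem~\ref{Thm2} to Theorem~\ref{Thm1} by splitting the absolute value, and then to carry out the extra work needed to keep solutions from accumulating on one side or the other. If $p = q$, then $\vert p^x - p^y\vert = c$ has at most one solution (taking $x > y$, the exponent $y$ is forced to equal the exponent of $p$ in $c$, and then $p^{x-y} = 1 + c/p^y$ is forced), so assume $p \ne q$. Call a solution $(x,y)$ of~\eqref{5} \emph{type I} if $p^x - q^y = c$ and \emph{type II} if $q^y - p^x = c$; since $c > 0$ these are exclusive, and since $p,q \ge 2$ every type I solution has $x \ge 1$ while every type II solution has $y \ge 1$. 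Applying Theorem~\ref{Thm1} with $(a,b) = (p,q)$ bounds the number of type I solutions by $2$ unless $(p,q,c) = (2,5,3)$, and applying it with $(a,b) = (q,p)$ bounds the number of type II solutions by $2$ unless $(q,p,c) = (2,5,3)$. In either exceptional case $(p,q,c)$ or $(q,p,c)$ equals $(2,F,F-2)$ with $F = 5$, which is on the stated list; so from now on each type contributes at most two solutions, and it suffices to rule out a total of three.

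Suppose then that~\eqref{5} has three or more solutions and that $(p,q,c)$ is not on the stated list. Relabelling $(p,x) \leftrightarrow (q,y)$ leaves~\eqref{5} and the (symmetric) exceptional list unchanged while interchanging the two types, so we may assume there are two type I solutions $(x_1,y_1)$, $(x_2,y_2)$ with $x_1 < x_2$ — hence $y_1 < y_2$ — together with a type II solution $(x_3,y_3)$. Subtracting the first two relations gives
\begin{equation*}
p^{x_1}\bigl(p^{x_2 - x_1} - 1\bigr) = q^{y_1}\bigl(q^{y_2 - y_1} - 1\bigr),
\end{equation*}
and since $\gcd(p,q) = 1$ this forces $q^{y_1} \mid p^{x_2-x_1}-1$ and $p^{x_1} \mid q^{y_2-y_1}-1$, a strong link between the multiplicative orders of $p$ and $q$ and the sizes of the exponents. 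Feeding in the type II relation $q^{y_3} = p^{x_3} + c = p^{x_3} + p^{x_1} - q^{y_1}$ — and noting that $y_1 = 0$ and $x_3 = 0$ cannot both hold, since then $p^{x_1} = 1 + c = q^{y_3}$ would force $p = q$ — and separating the cases $y_1 = 0$ and $y_1 \ge 1$, one is left with a manageable list of candidate triples, finite apart from the base-$2$ family.

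The main obstacle is exactly this last reduction: showing that the only triples that survive are $(3,5,2)$, $(2,3,5)$, $(2,3,7)$, $(2,11,7)$, and $(2,F,F-2)$ for Fermat primes $F$. When both $p$ and $q$ are large, or $c$ is large, the divisibility $p^{x_1} \mid q^{y_2-y_1}-1$ (so $p^{x_1}$ is small compared with $q^{y_2}$) together with $q^{y_3} = p^{x_3} + p^{x_1} - q^{y_1}$ leaves no room for a third solution by elementary size and congruence estimates; but the small-base cases $\{2,q\}$ and $\{3,q\}$ are where the real difficulty lies, and here we invoke the classifications of the solutions of $\vert 2^x - q^y\vert = c$ and $\vert 3^x - q^y\vert = c$ due to Luca~\cite{Lu} and Szalay~\cite{Sz}, which we re-prove elementarily — those of Luca via Lemma~\ref{Lem:MF} and its Corollary, those of Szalay by a more direct route. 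The base-$2$ case is the most delicate of all, since it contains the infinite exceptional family: for $c = F - 2$ with $F = 2^{2^n}+1$ a Fermat prime the solutions $(2^n,0)$, $(2^n+1,1)$, $(1,1)$ always occur, and one must check that no other triple $(2,q,c)$ admits a third solution. Assembling all these cases yields precisely the stated exceptional list and completes the proof.
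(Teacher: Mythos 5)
Your opening reduction is sound and matches the paper's: split the solutions by the sign of $p^x - q^y$, apply Theorem~\ref{Thm1} to each sign separately (absorbing the $(2,5,3)$ exception into the $(2,F,F-2)$ family), and conclude that a hypothetical third solution forces a configuration with two solutions of one sign and one of the other, exactly one of the three having a zero exponent. But from that point on the proposal asserts rather than proves. The entire difficulty of Theorem~\ref{Thm2} lies in eliminating those two mixed configurations, and your treatment of them (``one is left with a manageable list of candidate triples,'' ``leaves no room for a third solution by elementary size and congruence estimates'') contains no argument. In particular, nothing in your write-up explains why the problem reduces to the square equations $p^r - p^s + 1 = z^2$ and $2^r - 2^s + 1 = z^2$, which is the actual mechanism: in the configuration $q^{y_1} + c = p^{x_1}$, $p^{x_2} + c = q^{y_2}$, $1 + c = p^{x_3}$ with $p$ odd, reducing the first two equations modulo $p$ via the third gives $q^{y_1} \congruent 1 \bmod p$ and $q^{y_2} \congruent -1 \bmod p$, hence $2 \mid y_1$, hence $q^{y_1} = p^{x_1} - p^{x_3} + 1$ is a perfect square --- and that is exactly where Lemma~\ref{L3} (Luca) enters; the case $p=2$ is handled by Lemma~\ref{L6} (Szalay) together with Legendre-symbol arguments, and the remaining configuration (two solutions of the same sign with positive exponents plus the zero-exponent one) by $2$-adic valuations of $p^{x_i}-1$ and $q^{y_j}-1$. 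None of these steps is routine, and none appears in your proposal.

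You also misstate what the external inputs are. Luca's and Szalay's theorems do not ``classify the solutions of $\vert 2^x - q^y\vert = c$ and $\vert 3^x - q^y\vert = c$''; they classify the solutions of $p^r \pm p^s + 1 = z^2$ (for odd primes $p$) and of $2^r \pm 2^s + 1 = z^2$ respectively, and the paper re-proves the relevant ones elementarily via Lemmas~\ref{L1} and~\ref{L2}, not via Lemma~\ref{Lem:MF} (which concerns the Mersenne/Fermat exceptional triples and is used only in connection with Theorem~\ref{Thm3}). As written, your proposal is a correct outline of the first, easy half of the proof, with the second, essential half replaced by an appeal to results that do not say what you need them to say.
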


\begin{Theorem} 
\label{Thm3}
For distinct positive primes $p$ and $q$ and positive integer $c$ there are at most two solutions to the equation
\begin{equation}  (-1)^u  p^x + (-1)^v q^y = c   \label{1c} \end{equation}
in nonnegative integers $x$ and $y$ and integers $u, v \in \{0,1\}$,  
except when $(p,q,c)$ or $(q,p,c)$ is one of the following:
$(2,3,1)$, $(2,3,5)$, $(2,3,7)$, $(2,3,11)$, $(2,3,13)$, $(2,3,17)$, $(2,5,3)$, $(2,5,7)$, $(2,5,9)$, $(2,11,7)$, $(3,5,2)$, $(3,5,4)$, $(3,13,10)$, $(2, F, F-2)$, $(2, F, 2F-1)$, $(2, M, M+2)$, $(2, M, 2M+1)$, $(3, 3^n +(-1)^\delta 2, 2)$, $(2, 2^t +(-1)^\delta 3, 3)$ where $F>5$ is a Fermat prime, $M>3$ is a Mersenne prime, $\delta \in \{0, 1\}$, $n>1$ is a positive integer such that $(n, \delta) \ne (3,1)$, and $t>1$ is a positive integer such that $(t, \delta) \ne (2, 1)$, $(3, 1)$, or $(7,1)$.  

The solutions in these cases are as follows:
$$
\begin{matrix} 
2-1 = -2+3 = 2^2-3 = -2^3 + 3^2 = 1 \\
2^2 + 1 = 2 + 3 = 2^3 - 3 = -2^2 + 3^2 = 2^5 - 3^3 = 5 \\
2^3 - 1 = 2^2 + 3 = -2 + 3^2 = 2^4 - 3^2 = 7 \\
2^3 + 3 = 2 + 3^2 = -2^4 + 3^3 = 11 \\
2^2 + 3^2 = 2^4 - 3 = 2^8 - 3^5 = 13 \\
2^4 + 1 = 2^3 + 3^2 = -2^6 + 3^4 = 17 \\
2+1 = 2^2 - 1 = -2 + 5 = 2^3 - 5 = 2^7 - 5^3 = 3 \\
2^3 - 1 = 2+5= 2^5 - 5^2 = 7 \\
2^3 + 1 = 2^2 + 5 = -2^4 + 5^2 = 9 \\
2^3 - 1 = -2^2 + 11 = 2^7 - 11^2 = 7 \\
1+1 = 3-1 = -3 + 5 = 3^3 - 5^2 = 2 \\
3+1 = -1 + 5 = 3^2 - 5 = 4 \\
3^2 + 1 = -3 + 13 = -3^7 + 13^3 = 10 \\
(F-1) - 1 = -2 + F = (2F-2) - F = F-2 \\
(2F-2) + 1 = (F-1) + F = -(F-1)^2 + F^2 = 2F - 1 \\
(M+1) + 1 = 2 + M = (2M+2) - M = M + 2 \\
(2M+2) - 1 = (M+1) + M = (M+1)^2 - M^2 = 2M+1 \\
1 + 1 = 3 - 1 =  -(-1)^{\delta} 3^n + (-1)^\delta (3^n +(-1)^\delta 2) = 2 \\
2 + 1 = 2^2 - 1 = -(-1)^\delta 2^t + (-1)^\delta (2^t + (-1)^\delta 3) = 3 
\end{matrix}
$$
\end{Theorem}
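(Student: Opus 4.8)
The plan is to assume that some triple $(p,q,c)$ has three distinct solutions $(x_i,y_i,u_i,v_i)$, $i=1,2,3$, and to force $(p,q,c)$ into the exceptional list. Two normalizations come first: by the symmetry interchanging $(p,x,u)$ with $(q,y,v)$ we may take $p=2$ whenever $2\in\{p,q\}$; and since, when $p$ and $q$ are both odd, each of $p^x\pm q^y$, $1\pm q^y$, $p^x\pm 1$ and $1\pm 1$ is even, if $c$ is odd then $2\in\{p,q\}$ anyway. Each sign pattern $(u,v)$ reduces (\ref{1c}) to one of $p^x+q^y=c$, $p^x-q^y=c$, $q^y-p^x=c$ (the pattern $u=v=1$ being impossible), so the three solutions distribute among these three ``sub-equations''. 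The key preliminary, in the spirit of the basic forms of \cite{ScSt4}, is the \emph{Boundary Lemma}: a solution with $x=0$ forces $(-1)^v q^y=c-(-1)^u$, hence $q^y\in\{c-1,c+1\}$, so there are at most two solutions with $x=0$, and two occur only when both $c-1$ and $c+1$ are powers of $q$ (exponent $0$ allowed), i.e. only for $(q,c)\in\{(3,2),(2,3)\}$; symmetrically for $y=0$, while a solution with $x=y=0$ forces $c=2$. In particular at least one of the three solutions is \emph{interior} (has $x\ge 1$ and $y\ge 1$).

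If all three solutions are interior, then $(p,q,c)$ has three solutions in positive integers $x,y$, and Theorems~3 and~5 of \cite{Sc} together with Theorem~7 of \cite{ScSt} confine $(p,q,c)$ to a short list; the finitely many configurations those elementary theorems leave undecided are disposed of by Luca~\cite{Lu}, Szalay~\cite{Sz}, and, when $\{p,q\}=\{2,3\}$, by Mignotte~\cite{Mi} and Bennett~\cite{Be}. Every triple obtained this way is then checked to be among the small-$c$ entries of the statement, such as $(2,3,1)$, $(2,3,5)$, $(2,3,7)$, $(2,3,11)$, $(2,3,13)$, $(2,3,17)$, $(2,5,9)$.

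If at least one solution is a boundary solution, the Boundary Lemma lets us write $c=q^b+1$ or $c=q^b-1$ for some $b\ge 0$ (the case where $c\pm 1$ is a power of $p$ being symmetric). Substituting this value of $c$ into the remaining two solutions produces one of two situations. Either the other two solutions are interior, so that $p^x+q^y=q^b\pm 1$ or $\vert p^x-q^y\vert=q^b\pm 1$ has two interior solutions: the structure theory of two-solution Pillai equations (\cite{Sc}, \cite{ScSt2}, Theorem~2), reinforced by the transcendence-theoretic bounds of Mignotte and Bennett, forces the Fermat and Mersenne families $(2,F,F-2)$, $(2,F,2F-1)$, $(2,M,M+2)$, $(2,M,2M+1)$, together with $(3,3^n+(-1)^\delta 2,2)$ and $(2,2^t+(-1)^\delta 3,3)$ (subject to the stated primality hypotheses and index exclusions) and a handful of sporadic triples. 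Or one of the remaining two solutions is itself a boundary solution, in which case $c\pm 1$ is also a power of $p$, so either $p^a-q^b=2$, or the corner occurs (whence $c=2$), or two solutions share a zero coordinate (whence $(p,c)$ or $(q,c)\in\{(3,2),(2,3)\}$); each alternative is a finite verification and yields the remaining small-$c$ sporadics, such as $(2,5,3)$, $(2,5,7)$, $(2,11,7)$, $(3,5,2)$, $(3,5,4)$, $(3,13,10)$, along with the $c\le 4$ members of the families above.

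Collating the triples produced by the interior and boundary cases, one verifies that their union is exactly the exceptional set in the statement and reads off the displayed solutions. I expect the main obstacle to be the boundary case with two interior solutions: that is where the finiteness of the exceptional set is invisible to the elementary congruence-and-size arguments that handle everything else, so that the transcendence estimates of Mignotte~\cite{Mi} and Bennett~\cite{Be} are genuinely needed, and also where one must be most careful with the primality side-conditions, distinguishing $2^t+3$ from $2^t-3$ and $3^n+2$ from $3^n-2$, excluding the exceptional index pairs $(t,\delta)$ and $(n,\delta)$, and so on, so that no spurious family slips in and none is lost.
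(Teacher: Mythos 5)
Your overall skeleton --- classify the three putative solutions by how many have a zero exponent, dispatch the all-interior case to Theorem 7 of \cite{ScSt}, observe that a zero exponent forces $c\mp 1$ to be a prime power, and that two boundary solutions force $(q,c)\in\{(3,2),(2,3)\}$ --- matches the paper's strategy, and those reductions are sound. The genuine gap is in the case you yourself flag as the main obstacle: one boundary solution plus two interior ones. You appeal to a ``structure theory of two-solution Pillai equations, reinforced by the transcendence-theoretic bounds of Mignotte and Bennett,'' but no such off-the-shelf theory closes this case, and the Mignotte--Bennett bounds alone cannot, since the Fermat and Mersenne families show the exceptional set is infinite. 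The missing mechanism is this: the boundary solution gives $c=p^{x_1}\pm 1$; substituting into an interior solution of the form $q^{y}+c=p^{x'}$ yields $q^{y}=p^{x'}-p^{x_1}\mp 1$, and once congruence and quadratic-reciprocity arguments force $y$ even in the various sign configurations, one is facing $z^2=p^r\pm p^s+1$. The complete solution of these equations --- Luca's results for odd $p$ (Lemmas~\ref{L3}, \ref{L4}, \ref{L5}) and Szalay's for $p=2$ (Lemmas~\ref{L6}, \ref{L7}) --- is what actually produces the Fermat and Mersenne families and the sporadic triples; you cite \cite{Lu} and \cite{Sz} only in the all-interior case, where they are not needed. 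Without this reduction the sign-pattern case analysis (thirteen cases in the paper) does not close, and the residual $p=3$ subcases still require the explicit linear-forms bound of Lemma~\ref{Lem:M} together with Lemma 1 of \cite{ScSt2} to bound the exponents.

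You also underestimate the double-boundary case. When two solutions have a zero exponent one is reduced to (\ref{UV1}) or (\ref{UV2}), i.e.\ to showing $y_3=1$ in $-2^{x_3}+q^{y_3}=\pm 3$ and $-3^{x_3}+q^{y_3}=\pm 2$ with $q$ an arbitrary unknown prime; this is not a ``finite verification'' but needs Theorem 1.2 of \cite{BS} and Theorem 1.5 of \cite{BVY} (modularity-based results on ternary equations) to bound the prime factors of $y_3$, followed by the solution of explicit Thue equations. So the plan identifies the right decomposition but omits the two ideas --- the reduction to $z^2=p^r\pm p^s+1$ and the ternary-equation input for (\ref{UV1})--(\ref{UV2}) --- on which the proof actually turns.
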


We give the new elementary proofs of the results in \cite{Lu} and \cite{Sz} in Section 3.  The key to making these elementary proofs possible is the elementary proof of Lemma 2 in Section 3, which also has a further application which we give in Section 6:  we establish a bound on $n$ in the familiar equation $x^2 + C = y^n$, when $x$ and $y$ are primes or prime powers and $2 \mid C$.  The bound depends only on the primes dividing $C$ and the result is elementary.  Beukers~\cite{Bk2} established a bound on $n$ for more general $x$ when $y=2$, and Bauer and Bennett~\cite{BB} greatly improved this bound as well as allowing $y$ to take on many specific values.  The bounds of \cite{Bk2} and \cite{BB} depend on the value of $y$ and the specific value of $C$.  See also earlier results of Nagell~\cite{N} and Ljunggren~\cite{Lj}.   

Before proceeding, we give a brief discussion of these changes in the proofs in \cite{Lu} and \cite{Sz}, which deal with the equation 
$$p^r \pm p^s + 1 = z^2, $$
where $p$ is a prime and $z$, $r$, and $s$ are positive integers.   
Luca \cite{Lu} handles the case $p>2$ using lower bounds on linear forms in logarithms (see \cite[pp.~7--11]{Lu}) and the well known recent work of Bilu, Hanrot, and Voutier \cite{BHV} (see \cite[pp.~12--14]{Lu}).  In Section 3 we obtain short and elementary proofs of Luca's results, without interfering with the clever use of continued fractions in \cite[equation (18)]{Lu}, by using two elementary lemmas which replace the use of linear forms in logarithms and \cite{BHV} (see Lemmas 1 and 2 in Section 3).  Further, in proving Lemma 4 of this paper, we have removed Luca's use of work of Carmichael \cite{Car}.  Gary Walsh pointed out to the first author that \cite{Car} is not needed for proving an auxiliary lemma used by Luca to prove Lemma 5 of this paper; although this auxiliary lemma is not used in our proof of Lemma 5, Walsh's comment led to our new proof of Lemma 4.   
 
Szalay \cite{Sz} handles the equation $2^r - 2^s + 1 = z^2$ using a non-elementary bound of Beukers \cite{Bk2}.  However, an earlier result of Beukers, the elementary Theorem 4 of \cite{Bk1}, can be used instead, making Szalay's result elementary, so we will not need to give a new proof in this case.  Szalay \cite{Sz} also handles the case $2^r + 2^s + 1 = z^2$ using a nonelementary result in \cite{Bk2}.  In this case we have not obtained a strictly elementary proof; however, we do give a shorter proof of Szalay's result for the case $2^r + 2^s + 1 = z^2$ by replacing the older bound in \cite{Bk2} with the recent sharp result of Bauer and Bennett \cite{BB}, not available to Szalay.  Szalay's proof can be further shortened by observing that the methods of his Lemma 8 alone suffice to give the desired contradiction to Beukers' (or Bauer and Bennett's) results; the remaining auxiliary results in \cite{Sz}, including the mapping of one set of solutions onto another, are of independent interest.  An outline of a proof of this result was also given by Mignotte; see the comments at the end of Section D10 of \cite{G}.  

We are grateful to Michael Bennett for proving $y_3 = 1$ in equations (\ref{UV1}) and (\ref{UV2}) below by pointing out references \cite{BS} and \cite{BVY}.

\section{Context of the Problem} 

Before proceeding to the proofs, we view the results of this paper in the context of the following more general problem: 
for given integers $a>1$, $b>1$, $c>0$, $r>0$, and $s>0$, we consider $N$, the number of solutions $(x,y,u,v)$ to the generalized Pillai equation
$$  (-1)^u r a^x + (-1)^v s b^y = c \eqno{({\rm P})} $$
in nonnegative integers $x$, $y$ and integers $u$, $v \in \{0,1\}$.  Note that the choice of $x$ and $y$ uniquely determines the choice of $u$ and $v$, so we will usually refer to a solution $(x,y)$.   

\noindent 
{\it The Case $(ra,sb) = 1$ }

There are only a finite number of cases with $N > 3$ solutions to Equation (P) \cite{ScSt4}.  There are at least five infinite families of cases with $N=3$ solutions to (P), as well as a number of anomalous cases with $N=3$ (by \lq anomalous case' we mean a case not a member of a known infinite family).  Some of these anomalous cases are quite high, e.g., $(a,b,c,r,s) = (56744, 1477, 83810889, 1478, 56743)$,  \cite{ScSt4}.  We have not been able to give a complete finite list of such anomalous solutions, so the question arises:  what additional restrictions on the variables would make possible a proof which gives a complete list of anomalous solutions, thus improving the result to $N=2$ except for completely designated exceptions?  This question has been essentially answered with the additional restriction $x>0$ and $y>0$ (see \cite{ScSt3}, in which the problem is reduced to a finite search).  But even if only one of the exponents $(x_1, x_2, \dots, x_N, y_1, y_2, \dots, y_N)$ is equal to zero, the problem becomes more difficult:  even with the further restriction $rs=1$ the methods of \cite{Be} and \cite{ScSt2} do not suffice without additional heavy restrictions such as placing an upper bound on one of $b$, $c$, or $\min(x_1, x_2, \dots, x_N)$ (when $\min(y_1, y_2 , \dots, y_N) = 0$).  But if one adds the yet further restriction that $a$ and $b$ be prime, it is possible to give a complete list of infinite families and a complete list of anomalous solutions, thus obtaining $N=2$ with completely designated exceptions (Theorem 3 of this paper).   The restriction that $a$ and $b$ be prime is perhaps not as artificial as it may seem: computer searches in \cite{Be} and \cite{ScSt2} (supplemented with calculations on the second author's website) suggest that the list of exceptions in Theorem 3 would remain unchanged even if $p$ and $q$ were allowed to be any relatively prime integers (here of course we would be redefining $F$ and $M$ to allow composite Fermat and Mersenne numbers).   

\noindent 
{\it The General Case}  

In what follows we will refer to a {\it set of solutions} to (P) which we will write as
$$(a,b,c,r,s: x_1, y_1; x_2, y_2; \dots ; x_N, y_N)$$
and by which we mean the (unordered) set of ordered pairs $\{(x_1, y_1), (x_2, y_2), \dots, (x_N, y_N)\}$ where each pair $(x_i, y_i)$ gives a solution $(x,y)$ to (P) for given integers $a$, $b$, $c$, $r$, and $s$.  We say that two sets of solutions $(a,b,c,r,s: x_1, y_1; x_2, y_2; \dots ; x_N, y_N)$ and $(A,B,C,R,S: X_1, Y_1; X_2, Y_2; \dots; X_N, Y_N)$ belong to the same {\it family} if $a$ and $A$ are both powers of the same integer, $b$ and $B$ are both powers of the same integer, there exists a positive rational number $k$ such that $k c = C$, and for every $i$ there exists a $j$ such that $kr a^{x_i} = R A^{X_j}$ and $ks b^{y_i} = S B^{Y_j}$, $1 \le i, j \le N$.  

If $(ra,sb)=1$, then $k =1$ and there are only a finite number of sets of solutions in each family; therefore, when $(ra,sb)=1$, we often dispense with the notion of family and deal simply with sets of solutions.  

Equation (P) has been treated by many authors, usually under at least one of the following additional restrictions:

\begin{tabular}{c l}
{(A.)} & $\min(x,y) \ge 1$, \\
{(B.)} &  $\min(x,y) \ge 2$, \\
{(C.)} & $(u,v) = (0,1)$,  \\
{(D.)} &  $(u,v) \ne (0,0)$ \\
{(E.)} &  $\gcd(ra, sb)=1$,  \\
{(F.)} &  $r=s=1$,  \\
{(G.)} &  $a$ is prime,  \\
{(H.)} &  $a$ and $b$ are both prime,  \\
{(I.)} &  $a$ and $b$ are both greater than a fixed real number, \\
{(J.)} &  terms on the left side of (P) are large relative to $c$  \\ 
\end{tabular} 
 
For any combination of such restrictions, we consider the problem of finding a number $N_0$ such that there are an infinite number of (families of) sets of solutions for which $N = n$ for every $n$ less than or equal to $N_0$ but only a finite number of (families of) sets of solutions for which $N > N_0$.  We also consider the problem of finding a number $M$ such that no sets of solutions have $N > M$, while sets of solutions exist with $N = M$.  The following table summarizes some known results, giving, for a given set of restrictions, results on $N_0$ and $M$ along with citations of sources.  In the column headed \lq\lq Restrictions'' we use the letters given in the list above, also writing \lq\lq K'' to mean \lq\lq no restrictions except those given in (P).''

\begin{tabular}{c c c}
Restrictions & Results & Sources \\
\hline \\ 
A C J   & $M \le 9$ &  \cite{Sh}  \\
B C E I  &  $M \le 3$ & \cite{Le} \\
B C E F I & $M \le 2$ & \cite{Le}  \\
A C F G  & $M = 2$ & \cite{Sc} \\
A D F H  & $N_0 \le 2$, $M = 3$ & \cite{Sc} \\
A F H  & $N_0 \le 2$, $M=4$  & \cite{ScSt}  \\
A C F   & $M = 2$ & \cite{Be} \\
A F   & $N_0 \le 2$, $M = 4$ & \cite{ScSt2} \\
B C E I  &  $M \le 2$ & \cite{HT} \\
A C E  &  $M \le 3$ & \cite{HT} \\
C   & $N_0 = 3$, $M = 3$ & \cite{ScSt3} \\
A E  & $N_0 = 2$, $M \ge 4$ & \cite{ScSt3} \\
K  & $N_0 = 3$, $M = 5$ & \cite{ScSt4}, \cite{ScSt5}  \\
\end{tabular} 

Lower bounds on linear forms in logarithms are used for the proofs of all the results cited in the table above except for those in \cite{Sc} and \cite{ScSt}, which are strictly elementary.  
In this paper, we show that strictly elementary methods suffice to improve the results in \cite{Sc} by eliminating the restriction (A.) and by obtaining a definite value for $N_0$ in the case with the set of restrictions C, F, G.  We can also eliminate the restriction (A.) in improving the result from \cite{ScSt}, although here our methods are not strictly elementary.  

Yet stronger restrictions can give $N_0 = 1$:  see for example \cite[Theorems 1.3, 1.4, 1.5, and Proposition 2.1]{Be}, \cite[Theorems 2 and 6]{ScSt},  \cite[Theorems 6 and 7]{ScSt2}, and \cite[Theorem 3]{Te}.

\section{Preliminary Lemmas}  

\begin{Lemma}  
{\label{L1}}
Let $D$ be any squarefree integer, let $u$ be a positive integer, and let $S$ be the set of all numbers of the form $r + s \sqrt{D}$, where $r$ and $s$ are nonzero rational integers, $(r,sD)=1$, and $u | s$.  Let $p$ be any odd prime number, and let $t$ be the least positive integer such that $\pm p^t $ is expressible as the norm of a number in $S$, if such $t$ exists.  Then, if $\pm p^n$ is also so expressible, we must have $t | n$.  (Note the $\pm$ signs in the statement of this lemma are independent.)   
\end{Lemma}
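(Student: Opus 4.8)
The plan is to work in the ring $\mathbb{Z}[\sqrt{D}]$ (or more precisely to track norms of elements of the multiplicative set generated by $S$) and to argue by a minimality/division argument analogous to the proof that the order of an element divides any exponent killing it. First I would observe that $S$ is closed under multiplication: if $\alpha = r_1 + s_1\sqrt{D}$ and $\beta = r_2 + s_2\sqrt{D}$ are in $S$, then $\alpha\beta = (r_1r_2 + s_1 s_2 D) + (r_1 s_2 + r_2 s_1)\sqrt{D}$; the coefficient of $\sqrt{D}$ is divisible by $u$ since $u \mid s_1$ and $u \mid s_2$, and one checks that the gcd condition $(r_1 r_2 + s_1 s_2 D,\ (r_1 s_2 + r_2 s_1)D) = 1$ is preserved, using $(r_i, s_i D) = 1$ and that $D$ is squarefree. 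Hence $N(\alpha\beta) = N(\alpha)N(\beta)$, and the set of integers expressible (up to sign) as a norm of an element of $S$ is closed under multiplication. In particular, for the least $t$ with $\pm p^t$ so expressible, every $\pm p^{mt}$ is also so expressible.

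Now suppose $\pm p^n$ is a norm of some $\gamma = r + s\sqrt{D} \in S$, and write $n = qt + e$ with $0 \le e < t$ via the division algorithm; the goal is to force $e = 0$. Let $\delta = a + b\sqrt{D} \in S$ have norm $\pm p^t$. The natural move is to consider the quotient $\gamma \bar{\delta}^{\,q}$ (where $\bar{\phantom{x}}$ denotes conjugation $\sqrt D \mapsto -\sqrt D$), whose norm is $\pm p^{n}/(\pm p^t)^{q}$ in absolute value, i.e.\ $\pm p^e$; the problem is that $\gamma\bar\delta^{\,q}$ has coefficients divisible by $(\pm p^t)^q$ — we are dividing in $\mathbb{Z}[\sqrt D]$ and need the quotient to land back in $S$, i.e.\ to have the required gcd and $u$-divisibility properties after cancelling the factor $p^{tq}$. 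The key point will be to show that the prime $p$ splits appropriately in $\mathbb{Z}[\sqrt D]$ relative to the factorizations of $\gamma$ and $\delta$: since $p$ is odd and $p \mid N(\delta)$ with $(a, bD) = 1$, the element $\delta$ is coprime to its conjugate $\bar\delta$ (any common prime ideal divisor would divide $2a$, $2b\sqrt D$, hence $2$, contradicting $p$ odd and $p\nmid 2$), so the ideal $(p)$ factors as $\mathfrak{p}\bar{\mathfrak{p}}$ with $\mathfrak p = \gcd((p),(\delta))$ well-defined, and $\delta$ carries exactly the $\mathfrak p$-part. One then shows $\gamma$ must be divisible by $\mathfrak p^{q}$ (not $\bar{\mathfrak p}^q$), again because $\gamma$ cannot be divisible by both $\mathfrak p$ and $\bar{\mathfrak p}$ without $p \mid \gcd(r, sD)$; so $\gamma / \delta^{q}$ is an algebraic integer whose norm is $\pm p^e$ and whose $\sqrt D$-coefficient, after clearing denominators, retains divisibility by $u$ and retains coprimality of its two coordinates (here I would use that $u$ and $p$ interact cleanly — if $p \mid u$ some care is needed, but the $u$-divisibility of the $\sqrt D$-coefficient is inherited because both $\gamma$ and $\delta$ have it and we are essentially multiplying/dividing by units-times-$\mathfrak p$-powers). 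Thus $\pm p^e$ is a norm of an element of $S$ with $0 \le e < t$, so by minimality of $t$ we get $e = 0$, i.e.\ $t \mid n$.

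I expect the main obstacle to be precisely the bookkeeping in the previous paragraph: verifying that dividing $\gamma$ by $\delta^{q}$ in $\mathbb{Z}[\sqrt D]$ yields an element that is \emph{still in $S$} — that is, that after the division the two rational-integer coordinates are again coprime, that $D$ times the second coordinate is coprime to the first, and that $u$ still divides the second coordinate. The coprimality of $\delta$ and $\bar\delta$ away from $2$ is what makes the $\mathfrak p$ versus $\bar{\mathfrak p}$ dichotomy clean, and the oddness of $p$ in the hypothesis is exactly what is needed there; the squarefreeness of $D$ is what keeps $\gcd(r, sD) = 1$ equivalent to the "primitive" condition on ideals. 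A cleaner way to organize all of this, which I would adopt if the direct computation gets unwieldy, is to pass to the group $S/\!\sim$ of elements of $S$ modulo rational scalars and conjugation-compatible equivalence, show that $\mathrm{val}_{\mathfrak p} - \mathrm{val}_{\bar{\mathfrak p}}$ gives a well-defined homomorphism from the sub-semigroup generated by norm-$\pm p^k$ elements to $\mathbb Z$, and conclude that the set of attainable values of $|\mathrm{val}_{\mathfrak p} - \mathrm{val}_{\bar{\mathfrak p}}|$, equivalently the set of attainable exponents $k$, is a numerical semigroup which — because it is closed under the partial "subtraction" coming from multiplying by $\bar\delta$ — must be all multiples of its smallest nonzero element $t$.
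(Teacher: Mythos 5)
Your proposal follows essentially the same route as the paper's proof: write $n=qt+e$ by the division algorithm, pass to the quotient $\gamma/\delta^{q}$ (the paper organizes this step via ideals, noting that $P^{e}$ must be principal, taking a generator $\beta$ with $\gamma=\epsilon\alpha\beta$ or $\bar\gamma=\epsilon\alpha\beta$ for a unit $\epsilon$, and checking that $\epsilon\beta\in S$), and conclude that $\pm p^{e}$ is a norm from $S$ with $0\le e<t$, contradicting minimality --- with the oddness of $p$ and the coprimality condition $(r,sD)=1$ playing exactly the roles you identify. One correction to your first paragraph: $S$ is \emph{not} closed under multiplication in general (e.g.\ $\gamma\bar\gamma=N(\gamma)$ is rational, and more broadly multiplying an element generating a power of $\mathfrak{p}$ by one generating a power of $\bar{\mathfrak{p}}$ destroys the condition $(r,sD)=1$), but the only instance you actually use --- that $\delta^{m}\in S$ when $\delta\in S$ has norm $\pm p^{t}$ --- does hold, because $[\delta^{m}]=\mathfrak{p}^{mt}$ is divisible by no rational prime, so the quotient/minimality argument goes through as you outline.
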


Comment: We will use this lemma when $D>0$ to bypass the problem of units.  

\begin{proof}   
Assume that for some $p$ and $S$, there exists $t$ as defined in the statement of the lemma.  Then $p$ splits in $\ratQ(\sqrt{D})$; let $[p] = P P'$.  For each positive integer $k$ there exists an $\alpha$ in $S$ such that $P^{kt} = [\alpha]$.  Now suppose $\pm p^{kt+g}$ equals the norm of $\gamma$ in $S$ where $k$ and $g$ are positive integers with $g < t$.  Since $P^{kt+g} $ must be principal, $P^{g} = [\beta]$ for some irrational integer $\beta \in \ratQ(\sqrt{D})$.  Therefore, for some unit $\epsilon$, either $\gamma = \epsilon \alpha \beta$ or $\bar{\gamma} = \epsilon \alpha \beta$.  $\epsilon \alpha \beta$ has integer coefficients and the norm of $\alpha$ is odd, so $\epsilon \beta$ has integer coefficients.  Now $\alpha \in S$ and $\epsilon \alpha \beta \in S$, so that one can see that $\epsilon \beta \in S$, which is impossible by the definitions of $t$ and $g$.  
\end{proof}

\begin{Lemma}  
{\label{L2}}   
The equation   
\begin{equation} (1+ \sqrt{-D} )^r = m \pm \sqrt{-D} \label{3.8} \end{equation}
has no solutions with integer $r>1$ when $D$ is a positive integer congruent to 2 mod 4 and $m$ is any integer, except for $D=2$, $r=3$.    
  
Further, when $D$ congruent to 0 modulo 4 is a positive integer such that $1+D$ is prime or a prime power, (\ref{3.8}) has no solutions with integer $r>1$ except for $D=4$, $r=3$.    
\end{Lemma}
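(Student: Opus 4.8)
Set $\alpha=1+\sqrt{-D}$ and write $\alpha^{n}=a_n+b_n\sqrt{-D}$ with $a_n,b_n\in\mathbb Z$. Then $\{b_n\}$ is the Lucas sequence with parameters $(P,Q)=(2,\,1+D)$ (so $b_0=0$, $b_1=1$, $b_{n+1}=2b_n-(1+D)b_{n-1}$), $\{a_n\}$ is its companion sequence, and $a_n^{2}+Db_n^{2}=(1+D)^{n}$. Since $D$ is even, $\gcd(P,Q)=1$, so $\{b_n\}$ is a (non-degenerate) strong divisibility sequence. Equation~(\ref{3.8}) is precisely the assertion $b_r=\pm1$, and $b_3=P^{2}-Q=3-D$ equals $+1$ only for $D=2$ and $-1$ only for $D=4$, which accounts for the two listed exceptions. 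So the plan is to prove $b_r\ne\pm1$ for all $r>1$ outside those. If $b_r=\pm1$ then $b_m\mid b_r$ for every $m\mid r$, hence $b_m=\pm1$ for every $m\mid r$; since $b_2=2$ this removes all even $r$, and since $b_3=3-D$ the only composite $r$ not otherwise covered occur for $D\in\{2,4\}$ with $r$ a power of $3$, and these die because $b_9\mid b_r$ while $b_9\ne\pm1$ (a one-line computation). So it suffices to rule out $b_p=\pm1$ for every prime $p\ge5$.

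Fix such a prime $p$. From $b_n=\sum_{j\ge0}\binom{n}{2j+1}(-D)^{j}$ we get $b_p\equiv p\pmod D$; hence if $p\mid D$ then $p\mid b_p$ and $b_p\ne\pm1$ already, while if $p\nmid D$ then $b_p=\pm1$ forces $p\equiv\pm1\pmod D$, so $D\mid p^{2}-1$, and in particular $D\le p+1$ --- an unusually strong coupling of $D$ with $r$. Moreover $b_p=\pm1$ makes $a_p^{2}=(1+D)^{p}-D$ a perfect square, and since $p$ is odd this says that $(1+D)^{(p-1)/2}$ occurs as the $\sqrt{1+D}$-coordinate of a solution of the Pell-type equation $a^{2}-(1+D)X^{2}=-D$ (of which $(1,1)$ is a solution). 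Its solutions fall into finitely many orbits under the unit group of $\mathbb Z[\sqrt{1+D}]$, so the admissible values of $X$ run through finitely many binary linear recurrence sequences.

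The heart of the matter is that a term of such a recurrence can be a perfect power of $1+D$ only for a bounded index. This is exactly the place where one classically invokes the primitive-divisor theorems of Carmichael \cite{Car} and of Bilu, Hanrot, and Voutier \cite{BHV}; the point of Lemma~\ref{L2} is to replace them by an explicit estimate: for a prime $q\mid 1+D$, a lifting-the-exponent computation bounds the $q$-adic valuation of the $k$-th term by $O(\log k)$, whereas the term grows exponentially in $k$, so it can equal $(1+D)^{(p-1)/2}$ only while $k$, hence $p$, hence (via $D\le p+1$) $D$, stays below an explicit bound; the finitely many remaining $D$, further constrained by $D\mid p^{2}-1$, are cleared by direct computation, and only $D=2,r=3$ and $D=4,r=3$ survive. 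The mod-$4$ behaviour of $\{b_n\}$ --- which when $D\equiv2\pmod 4$ forces $b_p=1$ rather than $-1$, since then $1+D\equiv3\pmod 4$ --- and the hypothesis that $1+D$ be a prime power when $D\equiv0\pmod 4$ --- so that ``$X$ a power of $1+D$'' reads ``$X$ a prime power'', the form needed by the valuation argument --- are what make both the small-$D$ bookkeeping and the valuation step go through. The main obstacle is this last step: carrying out the valuation-versus-growth comparison effectively and by elementary means, while keeping control of the several (and not necessarily primitive) Pell recurrences and of the residue- and sign-cases ($D\equiv2$ vs.\ $0\bmod 4$, the sign of $b_p$, $D\mid p-1$ vs.\ $D\mid p+1$, the parity of $(p\mp1)/D$) that arise in disposing of the bounded range of $D$.
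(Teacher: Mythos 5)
Your reformulation and the easy reductions are correct: writing $(1+\sqrt{-D})^n = a_n + b_n\sqrt{-D}$, equation (\ref{3.8}) is exactly $b_r = \pm 1$; strong divisibility of the Lucas sequence (valid since $\gcd(2,1+D)=1$) kills even $r$ via $b_2=2$ and reduces odd composite $r$ to a prime divisor or to $b_9$ for $D\in\{2,4\}$; and $b_p \congruent p \bmod D$ correctly forces $D \mid p^2-1$ for a prime $p\ge 5$ with $p \notmid D$. But the entire content of the lemma is the case $r=p\ge 5$ prime, and for that case you offer only a strategy, which you yourself flag as ``the main obstacle.'' Two concrete reasons it does not close: (i) the lifting-the-exponent bound on $v_q(X_k)$ carries an additive constant depending on $D$ and on the fundamental solution of $u^2-(1+D)v^2=1$, so the valuation-versus-growth comparison yields at best $p \le f(D)$ for some $D$-dependent $f$; combined with $D\le p+1$ this bounds nothing unless you prove $f(D)=o(D)$, which you have not done and which is not automatic. (ii) For $D\congruent 2 \bmod 4$ there is no hypothesis that $1+D$ is a prime power, so ``$X$ is a power of $1+D$'' is not ``$X$ is a prime power'' and the valuation argument as described has nothing to grip; yet the lemma must hold for every such $D$. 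As written, the case $r=p\ge 5$ is unproved.

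For comparison, the paper's route is entirely explicit: Theorem 13 of \cite{BH} already forces $r$ to be a prime $\congruent 3 \bmod 4$, unique for each $D$, and yields the identity (\ref{3.9}) (your statement that $b_r=(-1)^{(D+2)/2}$); the case $r=3$ gives $|D-3|=1$, i.e., the two exceptions; and the case $3\notmid r$ is eliminated by elementary congruences and Jacobi-symbol evaluations --- modulo $D-3$ and $D+1$ to restrict $D$ (and, when $4\mid D$, to force $1+D=5^n$ with $n$ odd), then modulo $y^2+1$, $y^3-1$, and $y^{12}+1$ with $y=D+1$, ending in a contradiction modulo $5$ or $7$. To salvage your approach you would need to replace the valuation sketch with something of this explicit, $D$-uniform character.
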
 
  
\begin{proof}  
Assume (\ref{3.8}) has a solution with $r>1$ for some $m$ and $D$.  From Theorem 13 of \cite{BH}, we see that $r$ is a prime congruent to 3 mod 4 and there is at most one such $r$ for a given $D$.  Thus we obtain   
\begin{equation} (-1)^{{D+2 \over 2}} = r - {r \choose 3} D + {r \choose 5} D^2 - \dots - D^{{r-1 \over 2}} \label{3.9} \end{equation} 
If $r=3$, (\ref{3.9}) shows that $|D-3| = 1$, giving the two exceptional cases of the Lemma.  So from here on we assume $3 \notmid r$.    
  
We will use two congruences:   
$$  (-1)^{{D+2 \over 2}} \congruent \Legendre{ r}{ 3} 2^{r-1} \bmod D-3 \leqno{{\rm Congruence\ 1:}}$$  
$$  (-1)^{{D+2 \over 2}} \congruent 2^{r-1} \bmod D+1  \leqno{{\rm Congruence\ 2:}}$$  
Congruences 1 and 2 correspond to congruences (9e) and (9f) of Lemma 7 of \cite{BH} and can  be derived by considering the expansions of $(1+\sqrt{-3})^r$ and $(1+1)^r$ respectively.  Noting that $r-1 \congruent 2 \bmod 4$, from Congruence 1 we see that $D-3$ cannot be divisible both by a prime 3 mod 4 and a prime 5 mod 8.  So $D \congruent 2 \bmod 4$ implies $D \not\congruent 3 \bmod 5$.  Now let $D+1 = y$.  If $D \congruent 1 \bmod 5$, $y^r \congruent 3 \bmod 5$; since $m^2 + D = y^r$, $m^2  \congruent 2 \bmod 5$, impossible.  If $D \congruent 2 \bmod 5$, $y^r \congruent 2 \bmod 5$, so that 5 divides $m$.  But then we see from (\ref{3.8}) that  $5 | m$ implies $3 | r$, which we have excluded.  Now $y^r$ is congruent to $-y$ modulo   
$y^2 + 1$ so that $m^2$ is congruent $-2 y + 1$ modulo $y^2 + 1$.  So, using the Jacobi symbol, we must have   
$$ 1 = \Jacobi{-2y+1}{(y^2+1)/2} = \Jacobi{2y^2+2}{ 2y-1} = \Jacobi{y+2}{ 2y-1} = \Jacobi{-5}{ y+2}. $$  
If $D \congruent 2 \bmod{4}$, then $y \congruent 3 \bmod 4$ and the last Jacobi symbol in this sequence equals $\Jacobi{y+2}{ 5} = \Jacobi{D+3}{5}$, which has the value $-1$ when $D$ is congruent to 0 or 4 modulo 5.  Thus, when $D \congruent 2 \bmod 4$ and $r \ne 3$, we have shown that there are no values of $D$ modulo 5 that are possible.    
  
So we assume hereafter that $D \congruent 0 \bmod 4$.  Write $D+1 = p^n$ where $p$ is prime, and let $g$ be the least number such that $2^g \congruent -1 \bmod p$, noting Congruence 2.  We see that $g | r-1$ and also $g | p-1 | p^n - 1 = D$.  Now (\ref{3.9}) gives $-1 \congruent 1 \bmod g$ so that $g \le 2$.  Assume first that $n$ is odd.  Since $4 | D$, $p \congruent 1 \bmod 4$.  In this case, we must have $g=2$, $p=5$.  If $n$ is even, since we have $1+D = p^n$ and $m^2 + D = p^{rn}$,  we must have $2 p^{rn / 2} - 1 \le D = p^n - 1$, giving $r < 2$, impossible.  So we have $n$ odd, $p=5$.    
  
Since $n$ is odd, $D \congruent 4 \bmod 8$, and, since $r \choose 3$ is odd, (\ref{3.9}) gives $r \congruent 3 \bmod 8$.  Now assume $r \congruent 2 \bmod 3$ and let $y = 5^n = 1+D$.  Then $y^r \congruent y^2 \bmod y^3 - 1$, so that $m^2 \congruent y^2 -y + 1 \bmod y^2 + y + 1$, so that   
$$1 = \Jacobi{y^2 - y + 1}{ y^2 + y + 1} = \Jacobi{-2y}{ y^2 +y+1} = \Jacobi{-2}{ y^2 + y + 1}$$  
which is false since $y^2 + y+1 \congruent 7 \bmod 8$.  Thus we have $r \congruent 19 \bmod 24$ so that $y^r \congruent -y^7 \bmod y^{12} + 1$, so that $m^2 \congruent -y^7 - y + 1 \bmod {y^{12} + 1 \over 2}$.  Thus we have   
\begin{align*} 
 1 &= \Jacobi{-y^7 - y + 1}{ (y^{12}+1)/2} = \Jacobi{y^7 + y - 1}{ (y^{12} + 1)/2} = \Jacobi{ 2 (y^{12} + 1)}{ y^7 + y- 1} \cr
&= \Jacobi{y^{12} + 1}{ y^7 + y -1} = \Jacobi{ y^6 - y^5 - 1}{ y^7 + y - 1} 
= \Jacobi{ y^7 + y -1}{ y^6 - y^5 - 1} \cr
&= \Jacobi{ y^5 + 2 y }{ y^6 - y^5 - 1 } = \Jacobi{y^4 + 2}{ y^6 - y^5 - 1}  
= - \Jacobi{y^6 - y^5 - 1}{ y^4 + 2} \cr 
&=  \Jacobi{2 y^2 -2y+1}{ y^4 + 2} = \Jacobi{ y^4 + 2}{ 2 y^2 - 2y + 1} = \Jacobi{7}{ 2 y^2 - 2y + 1} \cr
&= \Jacobi{ 2 y^2 - 2y + 1}{ 7}  
\end{align*}  
which is possible only when $y$ is congruent to 1, 4, or 0 modulo 7.  This is impossible since   
$y$ is an odd power of 5.  This completes the proof of the lemma.    
\end{proof} 

An almost immediate consequence of Lemma~\ref{L2} is the following:

\begin{Lemma} {(\cite{Lu})} 
\label{L3}  
The only solutions to the equation  
$$ p^r - p^s + 1 = z^2 $$  
in positive integers $(z,p,r,s)$ with $r>s$ and $p$ an odd prime are   
$(z,p,r,s) = (5,3,3,1)$, $(11,5,3,1)$.    
\end{Lemma}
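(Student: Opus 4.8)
The plan is to reduce the lemma to Lemma~\ref{L2}. First I would rewrite $p^r - p^s + 1 = z^2$ as
\[
z^2 + D = p^r, \qquad D := p^s - 1 .
\]
Since $p$ is an odd prime and $s \ge 1$, the number $D$ is positive and even, so $D \equiv 0$ or $2 \pmod 4$, and $1 + D = p^s$ is a prime power. Also $\gcd(z,D) = 1$ (a common prime divisor of $z$ and $D$ would divide $z^2 + D = p^r$, hence be $p$, which does not divide $D$), $z$ is odd (since $p^r - p^s$ is even), and $z > 1$ (else $r = s$). Thus the displayed equation is exactly of the form to which Lemma~\ref{L2} applies: its first part handles $D \equiv 2 \pmod 4$, and its second part handles $D \equiv 0 \pmod 4$ because $1 + D$ is a prime power.

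It remains to extract from the displayed equation an instance of (\ref{3.8}). I would factor $z^2 + D = (z+\sqrt{-D})(z-\sqrt{-D})$ in $\ratQ(\sqrt{-D})$; since $-D \equiv 1 \pmod p$, the prime $p$ splits, say $(p) = \mathfrak{p}\,\bar{\mathfrak{p}}$, and $z\pm\sqrt{-D}$ generate coprime ideals (any common ideal divisor divides $2\sqrt{-D}$ and $2z$, so has norm dividing $\gcd(4D,4z^2)=4$ and dividing $p^r$, hence is trivial). Taking $\mathfrak{p}$ to be the prime over $p$ that divides $1+\sqrt{-D}$, we get $(1+\sqrt{-D}) = \mathfrak{p}^s$ (as $1+\sqrt{-D}$ has norm $p^s$ and cannot be divisible by both $\mathfrak{p}$ and $\bar{\mathfrak{p}}$), and, replacing $z$ by $-z$ if necessary, $(z+\sqrt{-D}) = \mathfrak{p}^r$. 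Now I would divide $1+\sqrt{-D}$ out of $z+\sqrt{-D}$ as often as possible while staying inside the order $\mathbb{Z}[\sqrt{-D}]$; by the method used in the proof of Lemma~\ref{L1}, using that $N(1+\sqrt{-D})=p^s$ is prime to the conductor of $\mathbb{Z}[\sqrt{-D}]$ (which divides $2D$ and so is prime to $p$), this is legitimate and also shows that the quotient $\gamma$, with $z+\sqrt{-D} = (1+\sqrt{-D})^k\gamma$ and $1+\sqrt{-D}\notmid\gamma$, has norm $p^{r-ks}$ with $0\le r-ks<s$. If $0<r-ks<s$, then writing $\gamma=a+b\sqrt{-D}$ gives $a^2+b^2D = p^{r-ks}<1+D$, forcing $b\in\{0,\pm1\}$: the value $b=\pm1$ yields $a=0$ and $\gamma=\pm\sqrt{-D}$, whose norm $p^s-1$ is not a positive power of $p$, while $b=0$ yields $\gamma=a$ with $a\mid 1$ (compare the coefficients of $\sqrt{-D}$ in $z+\sqrt{-D}=a(1+\sqrt{-D})^k$), so $p^{r-ks}=1$. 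Hence $r-ks=0$, so $s\mid r$, and $\gamma$ is a unit of $\mathbb{Z}[\sqrt{-D}]$, hence $\pm1$ (as $D\ge2$); therefore
\[
(1+\sqrt{-D})^{r/s} = m \pm \sqrt{-D}, \qquad m = \pm z ,
\]
an instance of (\ref{3.8}) with exponent $r/s>1$.

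By Lemma~\ref{L2} we must have $(D,\,r/s)=(2,3)$ or $(4,3)$. If $D=2$, then $p^s=3$, so $p=3$, $s=1$, $r=3$, and $z^2 = 3^3-3+1=25$, giving $(z,p,r,s)=(5,3,3,1)$; if $D=4$, then $p^s=5$, so $p=5$, $s=1$, $r=3$, and $z^2=5^3-5+1=121$, giving $(z,p,r,s)=(11,5,3,1)$. The step I expect to be the main obstacle is the algebraic number theory of the second paragraph — in particular, keeping the repeated division by $1+\sqrt{-D}$ inside the possibly non-maximal, non-principal order $\mathbb{Z}[\sqrt{-D}]$ when $D$ is not squarefree, and thereby confirming that the final quotient has norm below $p^s$; this is precisely the circle of ideas behind Lemma~\ref{L1}. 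The rewriting, the elementary size estimate, and the passage from Lemma~\ref{L2} to the two solutions are all routine.
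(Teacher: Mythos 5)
Your proof is correct and follows essentially the same route as the paper: establish $s \mid r$ and the relation $(1+\sqrt{-D})^{r/s} = m \pm \sqrt{-D}$, then invoke Lemma~\ref{L2}. The paper gets the divisibility and the unit $\pm 1$ by citing Lemma~\ref{L1} for the squarefree part of $p^s-1$ (plus a parity remark for $D_0 \in \{1,3\}$), whereas you unpack that descent by hand inside the order $\mathbb{Z}[\sqrt{-D}]$; the substance is the same.
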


\begin{proof} 
As in \cite{Lu}, we write $p^s - 1 = D u^2$, $D$ and $u$ positive integers and $D$ squarefree.  Clearly, $p$ splits in $\ratQ(\sqrt{-D})$, and we can let $[p] = \pi_1 \pi_2$ be its factorization into ideals.  We can take  
$${\pi_1}^s  = [ 1 + u \sqrt{-D} ], {\pi_1}^r = [z \pm u \sqrt{-D} ].  $$
At this point we diverge from \cite{Lu}: clearly $s$ is the least possible value of $n$ such that $p^n = h^2 + k^2 u^2 D$ for some relatively prime nonzero integers $h$ and $k$, so we can apply Lemma~\ref{L1} to obtain $s | r$.  Thus,   
$$( 1 + u \sqrt{-D} )^{r/s} = ( z \pm u \sqrt{-D} ) \epsilon $$  
where $\epsilon$ is a unit in $\ratQ(\sqrt{-D})$.  If $D = 1$ or 3, we note $2 | u$ and $ 2 \notmid z$, so that we must have $\epsilon = \pm 1$.  Now Lemma~\ref{L3} follows from Lemma~\ref{L2}.    
\end{proof}

Lemma~\ref{L3} is the only result from \cite{Lu} which we will need to prove Theorems~\ref{Thm1} and \ref{Thm2}.  However, for Theorem 3 we will also need Lemmas 4 and 5 below, for which we again give short elementary proofs:

\begin{Lemma}{(\cite{Lu})}  
\label{L4} 
The equation   
\begin{equation}  z^2 = w^r + \varepsilon_1 w^s + \varepsilon_2, \qquad \varepsilon_1, \varepsilon_2 \in \{1, -1\}, \label{1.4} \end{equation}   
has no positive integer solutions $(z,w,r,s)$ with $r>s$, $r$ even, and $w>2$.    
\end{Lemma}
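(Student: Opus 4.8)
The plan is to use the factorization $z^{2}-w^{r}=(z-w^{k})(z+w^{k})$, where $r=2k$, to convert the equation into a single identity in $n:=|z-w^{k}|$, and then to pit the size of $n$ against the divisibility information this identity carries. Comparing $z^{2}$ with $w^{2k}$ and with $(w^{k}\pm1)^{2}$ shows that $z=w^{k}\pm n$ for some $n\ge 1$ with $\gcd(n,w)=1$ (since $(z-w^{k})(z+w^{k})\equiv\varepsilon_2\pmod w$), and that $s>k$; write $s=k+v$, so $1\le v\le k-1$ and hence $k\ge 2$. Substituting $z=w^{k}\pm n$ and cancelling $w^{2k}$ yields the basic identity
\[ n^{2}-\varepsilon_2=w^{k}\,|w^{v}-2n|, \]
with $\varepsilon_1=1$ precisely when $2n<w^{v}$ and $\varepsilon_1=-1$ precisely when $2n>w^{v}$. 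Since $n\,|z+w^{k}|=|\varepsilon_1w^{s}+\varepsilon_2|\le w^{s}+1$ we get $n\le w^{v}$, and then $\gcd(n,w)=1$ forces $n\le w^{v}-1<w^{k-1}$; since the right side of the identity is nonzero (odd when $w$ is odd, a positive even integer when $w$ is even), $n^{2}\ge w^{k}-1$, i.e. $n\ge\sqrt{w^{k}-1}$. The two bounds give $w^{k}-1\le(w^{v}-1)^{2}<w^{2v}$, so $k\le 2v\le 2k-2$, which already excludes $k=2$; thus $k\ge 3$ in what follows. When $w$ is even, a $2$-adic computation (modulo $16$) forces $\varepsilon_2=1$, after which the smallness of $v_2(n^{2}-1)$ together with the size bounds leaves no room; so I henceforth take $w$ odd.

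For $w$ odd, $n$ is even, and reducing the identity modulo $w^{k}$ gives $n^{2}\equiv\varepsilon_2\pmod{w^{k}}$, so $\varepsilon_2$ is a square modulo every prime dividing $w$. If $\varepsilon_2=1$ and $w$ is a prime power, the only solutions of $n^{2}\equiv1\pmod{w^{k}}$ in $(0,w^{k})$ are $n=1$ and $n=w^{k}-1$, both incompatible with $\sqrt{w^{k}-1}\le n\le w^{k-1}-1$; the composite case reduces to this one (and Lemma~\ref{L4} is applied in this paper only with $w$ a prime or prime power). If $\varepsilon_2=-1$, then every prime dividing $w$ is $\equiv 1\pmod 4$, so $w\ge 5$ and $w=N(\pi)$ for some $\pi\in\mathbb{Z}[i]$; the equation now reads $z^{2}+1=w^{s}(w^{m}+\varepsilon_1)$ with $m=k-v$, and since $z$ is odd one has $\gcd(z+i,z-i)=(1+i)$, whence $z+i=(1+i)\pi^{s}\rho$ with $N(\rho)=(w^{m}+\varepsilon_1)/2$ and $\gcd(\rho,w)=1$. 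This reduces matters to ruling out $\mathrm{Im}((1+i)\pi^{s}\rho)=\pm1$ for $k<s<2k$, which I would handle in the same spirit as the reductions preceding Lemmas~\ref{L3} and \ref{L4}: Lemma~\ref{L1} removes the unit ambiguity and Lemma~\ref{L2}, combined with the continued-fraction estimate retained from Luca's argument, supplies the contradiction. The companion case $\varepsilon_2=1$ with $w$ composite is treated directly from the identity rather than through a quadratic field.

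The step I expect to be the real obstacle is the case $\varepsilon_2=-1$ when $\rho$ is not a unit, i.e. when $(w^{m}+\varepsilon_1)/2>1$, since then there is no clean normal form of the type $(1+\sqrt{-D})^{r}=M\pm\sqrt{-D}$ to invoke. The way through is to exploit the very narrow window $w^{k}<z+w^{k}<w^{k}+w^{k-1}$ into which the $w$-coprime divisor $(z+w^{k})/2$ of $w^{s}+\varepsilon_2$ is forced, and to run a descent on $n^{2}-\varepsilon_2=w^{k}\,|w^{v}-2n|$ — repeatedly trading a pair (large divisor, small divisor) for a strictly smaller pair, in the manner of the Euclidean/continued-fraction algorithm — until the numbers are small enough to settle by inspection, at which point they confirm that no solutions arise.
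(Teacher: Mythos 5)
Your elementary reduction---writing $r=2k$, $z=w^k\pm n$, and deriving $n^2-\varepsilon_2=w^k\,\vert w^v-2n\vert$ with $\gcd(n,w)=1$ and $\sqrt{w^k-1}\le n\le w^v-1$, whence $3\le k\le 2v-1$---is correct and cleanly done, but none of the three branches you then open actually closes. (i) For $w$ even the deduction $\varepsilon_2=1$ is fine, but ``the smallness of $v_2(n^2-1)$ together with the size bounds leaves no room'' is not true as stated: the identity forces only $v_2(n^2-1)=k\,v_2(w)+1$, i.e.\ $2^{k v_2(w)}$ divides one of $n\mp1$, hence $n\ge 2^{k v_2(w)}-1$; writing $w=2^{e}m$ with $m$ odd, this is compatible with $n\le w^{k-1}-1=2^{(k-1)e}m^{k-1}-1$ whenever $m^{k-1}\ge 2^{e}$ (for $w=6$, $k=3$ the $2$-adic condition still leaves $n\in\{17,23,25,31\}$ in play, and one must invoke the $3$-part to kill them). (ii) For $w$ odd and composite with $\varepsilon_2=1$, the congruence $n^2\equiv1\pmod{w^k}$ has $2^{\omega(w)}>2$ roots and the nontrivial ones are not a priori excluded from the window $[\sqrt{w^k-1},\,w^{k-1}-1]$; ``the composite case reduces to this one'' is an assertion, not an argument, and the Lemma is stated for all $w>2$, not only prime powers. (iii) For $\varepsilon_2=-1$ you concede that when $\rho$ is not a unit there is no normal form to which Lemma~\ref{L2} applies, and the proposed Euclidean ``descent'' is a plan rather than a proof. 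So the argument is complete only when $w$ is an odd prime power and $\varepsilon_2=1$.

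The idea you are missing is that the hypothesis $2\mid r$ turns (\ref{1.4}) into a Pell equation with a visibly minimal fundamental solution, which handles all of these branches at once. The paper sets $X=z$, $Y=w^{s/2}$, $D=w^{r-s}+\varepsilon_1$ when $s$ is even (and $Y=w^{(s-1)/2}$, $D=w(w^{r-s}+\varepsilon_1)$ when $s$ is odd), so that (\ref{1.4}) reads $X^2-DY^2=\varepsilon_2$ and $(w^{(r-s)/2},1)$ is the least solution of $U^2-DV^2=\pm1$. Since $Y$ is a pure power of $w$ and every prime of $w$ divides $D$ (or divides $Y_2$), the divisibility structure of the sequence $Y_j$---via Lemmas 1--3 of \cite{Sc} in the even case, and St\"ormer's theorem \cite{Sto} in the odd case---pins $(X,Y)$ down to the solution of index $2$ (resp.\ index $1$), giving $w^{s/2}=2w^{(r-s)/2}$ (resp.\ $w^{(s-1)/2}=2w^{(r-s-1)/2}$) and hence $w=2$, which is excluded. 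If you want to salvage your approach, the missing ingredient is exactly such a St\"ormer-type statement (``if every prime of $Y$ divides $D$, then $(X,Y)$ is the fundamental solution''), not further case-by-case congruences.
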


\begin{proof}  
First we consider the case $s$ even.    
We establish some notation as in \cite{Lu}.  Letting $X=z$, $Y=w^{s/2}$, and $D = w^{r-s}+\varepsilon_1$, we rewrite (\ref{1.4}) as   
\begin{equation}X^2 - D Y^2 = \varepsilon_2. \label{(3.1)}\end{equation}  
The least solution of $U^2 - D V^2 = \pm 1$ is $(U, V) = (w^{(r-s)/2}, 1)$.  Write $X_n + Y_n \sqrt{D} = (w^{(r-s)/2} + \sqrt{D})^n$ for any integer $n$.  For some $j > 1$, $(X, Y) = (X_j, Y_j)$.  As in \cite{Lu}, it is easily seen that $2 | j$.  At this point we diverge from \cite{Lu} and apply Lemmas 1--3 of \cite{Sc} to see that, if $j > 2$, there exists a prime $q$ such that $q | w$, $q | (Y_j/Y_2)$, $Y_{2q} | Y_j$, and $Y_{2q}/ (q Y_2)$ is an integer prime to $w$.  But since $Y_{2q} / (q Y_2)$ is greater than 1 and divides $Y_j$, we have a contradiction.  So $j = 2$ and we must have 
\begin{equation}w^{s/2} = Y = Y_2 = 2 w^{(r-s)/2}. \label{(3.2)}\end{equation} 
  
Now we consider the case $s$ odd and again establish notation as in \cite{Lu}.  Letting $X=z$, $Y = w^{(s-1)/2}$, and $D = w ( w^{r-s} + \varepsilon_1)$, we rewrite (\ref{1.4}) as (\ref{(3.1)}).  At this point we diverge from \cite{Lu} and apply an old theorem of St\"ormer \cite{Sto}: his Theorem 1 says if every prime divisor of $Y$ divides $D$ in (\ref{(3.1)}), then $(X, Y) = (X_1, Y_1)$, the least solution of (\ref{(3.1)}).  Theorem 1 of \cite{Sto} also applies to show that $(2 w^{r-s} + \varepsilon_1,  2 w^{(r-s - 1)/2} )$ is the least solution $(U_1, V_1)$ of $U^2 - D V^2 = 1$.  If $\varepsilon_2 = -1$, then $2 X_1 Y_1 = 2 w^{(r-s-1)/2}$, which is impossible since $(X_1, w) = 1$, and $w>2$ implies $z = X_1 > 1$.  Thus we must have $\varepsilon_2 = 1$, so that 
\begin{equation}w^{(s-1)/2} = Y = Y_1 = V_1 = 2 w^{(r-s - 1)/2}. \label{(3.3)}\end{equation}
At this point we return to \cite{Lu} where it is pointed out that (\ref{(3.2)}) and (\ref{(3.3)}) require $w=2$ which is not under consideration.  
\end{proof} 

We note that Theorem 1 of \cite{Sto} has a short elementary proof.

\begin{Lemma}{(\cite{Lu})} 
\label{L5} 
There are no solutions to the equation  
\begin{equation}p^r + p^s + 1 = z^2 \label{1.5} \end{equation}  
in positive integers $(z,p,r,s)$ with $p$ an odd prime.     
\end{Lemma}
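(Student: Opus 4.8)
The plan is to mimic the structure of the proof of Lemma~\ref{L4}, splitting into the cases $r$ even and $r$ odd, and reducing in each case to a Pell-type equation whose minimal solution is forced by St\"ormer's theorem, then deriving a contradiction from a divisibility obstruction. First suppose $r$ is even. If $s$ is also even, then $p^r + p^s + 1 = z^2$ can be written, with $X = z$, $Y = p^{s/2}$, $D = p^{r-s}+1$, as $X^2 - DY^2 = 1$; the minimal solution of $U^2 - DV^2 = \pm 1$ is $(p^{(r-s)/2},1)$, and applying Lemmas 1--3 of \cite{Sc} exactly as in the proof of Lemma~\ref{L4} forces $(X,Y) = (X_2,Y_2)$, hence $p^{s/2} = Y_2 = 2p^{(r-s)/2}$, which is impossible since $p$ is an odd prime (the left side is odd, the right side even). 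If instead $s$ is odd, we set $X = z$, $Y = p^{(s-1)/2}$, $D = p(p^{r-s}+1)$, obtaining again $X^2 - DY^2 = 1$; since every prime divisor of $Y$ divides $D$, Theorem 1 of \cite{Sto} gives $(X,Y) = (X_1,Y_1)$, the least solution, and the same theorem identifies the least solution of $U^2 - DV^2 = 1$, forcing $p^{(s-1)/2} = 2p^{(r-s-1)/2}$, again impossible for odd $p$.

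Now suppose $r$ is odd. Here I would instead use Lemma~\ref{L2} (or rather its consequence, Lemma~\ref{L3}) by reinterpreting the equation in a quadratic field. Rewrite $p^r + p^s + 1 = z^2$ as $p^r = z^2 - p^s - 1$; more usefully, since $r$ is odd and $r > s$, we may factor out the appropriate power of $p$ to get back to a ``$p^a - p^b + 1 = z^2$''-type shape, or work directly with $z^2 - 1 = p^s(p^{r-s}+1)$ and split $z-1$, $z+1$ between the two factors. The intended route is to show that the case $r$ odd either is impossible outright by a congruence argument (reducing modulo a small prime, as in the proof of Lemma~\ref{L2}, using that $r$ odd makes $p^{r-s}+1$ divisible by $p+1$ only under restrictive conditions) or is reduced to Lemma~\ref{L3}, whose solutions $(z,p,r,s) = (5,3,3,1)$ and $(11,5,3,1)$ have the wrong sign structure to satisfy \eqref{1.5}. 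A cleaner alternative: note that \eqref{1.5} with $r$ odd forces $z^2 \equiv 1 \pmod{p}$ and a local analysis modulo $8$ and modulo small primes dividing $p^{r-s}+1$ to eliminate all cases, paralleling the Jacobi-symbol computations in Lemma~\ref{L2}.

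The main obstacle I anticipate is the odd-$r$ case: unlike Lemma~\ref{L4}, where $r$ is assumed even so only the parity of $s$ must be handled, here both parities of $r$ arise and the odd case does not directly yield a Pell equation with a unit-free minimal solution of the convenient form. I expect one must either invoke a sharper version of the quadratic-field argument behind Lemma~\ref{L2} — tracking that $(1 + u\sqrt{-D})^{r/s}$ differs from $z \pm u\sqrt{-D}$ only by $\pm 1$ when $D \in \{1,3\}$ and by a genuine unit otherwise, then ruling out the latter via the Bilu--Hanrot--Voutier-type input already packaged in Lemma~\ref{L2} — or else run a finite battery of congruence obstructions modulo $5$, $7$, $8$ as in Lemma~\ref{L2}'s proof. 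I would structure the writeup so that the even-$r$ case (which is essentially a transcription of Lemma~\ref{L4}'s argument) is dispatched first and quickly, reserving the bulk of the work for the odd-$r$ case, where the key identity to exploit is that $4z^2 = 4p^r + 4p^s + 4$ and completing the square against $p^s$ turns \eqref{1.5} into an instance amenable to Lemma~\ref{L2} after dividing by a square.
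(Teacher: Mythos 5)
Your even-$r$ case is fine, but it is also the easy half: with $r>s$ and $r$ even, equation (\ref{1.5}) is literally Lemma~\ref{L4} with $\varepsilon_1=\varepsilon_2=1$ and $w=p>2$, so nothing new is needed there. The real content of Lemma~\ref{L5} is the complementary case, and there your proposal has a genuine gap. Reducing (\ref{1.5}) modulo 8 shows $p\equiv 3 \bmod 4$ and $r\not\equiv s \bmod 2$; by the symmetry of the equation in $r$ and $s$ one may take $r$ odd and $s$ even, and this situation cannot be relabeled into your even-$r$ case when the larger exponent is the odd one. It also does not yield a Pell equation with a visible fundamental solution, so neither the Lemma~\ref{L4} machinery nor St\"ormer's theorem applies. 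Your suggestions for it --- ``reduce to Lemma~\ref{L3}'' (whose equation has a minus sign, so there is no such reduction), ``a congruence argument modulo small primes'' (unspecified), or ``complete the square and apply Lemma~\ref{L2}'' (Lemma~\ref{L2} lives in imaginary quadratic fields, while here $z^2-(p^s+1)=p^r>0$ puts you in the real field $\ratQ(\sqrt{p^s+1})$) --- are not workable as stated.

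The paper's actual argument for this case is the piece you are missing. Write $p^s+1=Du^2$ with $D$ squarefree, and let $S$ be the set of integers $h+k\sqrt D$ with $(h,kD)=1$ and $u\mid k$. Then $p^r=z^2-Du^2$ and $-p^s=1^2-Du^2$ are both norms of elements of $S$, so Lemma~\ref{L1} (stated for arbitrary squarefree $D$ precisely so that it applies to real fields and bypasses the unit problem) gives $\pm p^d$ as a norm from $S$ for some $d$ dividing both $r$ and $s$; since $d$ divides the odd number $r$ and also divides the even number $s$, we get $d\le s/2$. This produces coprime $X,Y$ with $X^2-Y^2(p^s+1)=\pm p^d$ and $|p^d|<\sqrt{p^s+1}$, forcing $X/Y$ to be a convergent of $\sqrt{p^s+1}=\sqrt{m^2+1}$, whose convergents realize only the values $\pm1$ --- a contradiction. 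Without this step (or some substitute for it) your proof does not close.
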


\begin{proof}
We first establish some notation by paraphrasing \cite[Section 3]{Lu}:   
Looking at (\ref{1.5}), we see that the only case in which solutions might exist is when $p \congruent 3 \bmod 4$ and $r-s$ is odd; choose $r$ odd and let $p^s + 1 = D u^2$, with $D$ square-free and $u > 0$ an integer.  At this point we diverge from \cite{Lu} and note that if $S$ is the set of all integers of the form $h + k \sqrt{D}$ with nonzero rational integers $h$ and $k$, $(h,kD) = 1$ and $u | k$, then $p^r$ and $-p^s$ are both expressible as the norms of numbers in $S$.  Therefore Lemma~\ref{L1} shows that $\pm p^d$ is expressible as the norm of a number in $S$, where $d$ divides both $r$ and $s$.  From this  point on, we return to the method of proof of \cite{Lu}: $r$ is odd and $s$ is even, so we have $d \le s/2$.  For some coprime positive integers $X$ and $Y$ such that $(X, p^s +1) = 1$, we must have  
\begin{equation}  X^2 - Y^2 (p^s+1) = \pm p^d. \label{(3.4)}\end{equation}  
(\ref{(3.4)}) corresponds to (17) in \cite{Lu}.  Since $| p^d| < \sqrt{p^s+1}$, $X/Y$ must be a convergent of the continued fraction for $\sqrt{p^s+1}$.  But then, since $p^s + 1$ is of the form $m^2 + 1$, we must have $p^d = \pm 1$, impossible.  
\end{proof}    
  
It has already been pointed out in the Introduction that the following lemma can be made elementary simply by replacing the result from \cite{Bk2} used in Szalay's proof by the elementary result \cite[Theorem 4]{Bk1}. 

\begin{Lemma}{(\cite{Sz})}  
\label{L6}
The equation   
$$  2^r - 2^s + 1 = z^2  $$  
has no solutions in positive integers $(r,s,z)$ with $r>s $ except for the following cases:  
$$ (r,s,z) = (2t, t+1, 2^t-1) { \rm \ for\ positive\ integer\ } t>1 $$  
$$  (r,s,z) = ( 5,3,5) $$  
$$  (r,s,z) = (7,3,11)  $$  
$$  (r,s,z) = (15,3,181)  $$  
\end{Lemma}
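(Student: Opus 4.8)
\medskip
\noindent\textbf{Proof proposal.}
The plan is to recast the equation as a generalized Ramanujan--Nagell equation and then, at the one point where a genuine bound on the number of solutions is needed, to invoke the elementary Theorem 4 of \cite{Bk1} in place of the non-elementary bound from \cite{Bk2} used by Szalay~\cite{Sz}. First I would observe that, since $r>s\ge 1$, the number $2^r-2^s$ is even, so $z$ is odd; writing $(z-1)(z+1)=2^s(2^{r-s}-1)$ with $2^{r-s}-1$ odd gives $v_2\bigl((z-1)(z+1)\bigr)=s$, and since $z-1$ and $z+1$ are consecutive even integers exactly one of which is $\equiv 2\pmod 4$, this forces $s\ge 3$ (in particular $s=1$ and $s=2$ give no solutions). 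Rewriting the equation as
$$z^2 + D = 2^r, \qquad D = 2^s-1,$$
we have a generalized Ramanujan--Nagell equation with $D\equiv 7\pmod 8$.

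If $s=3$ then $D=7$ and this is the classical Ramanujan--Nagell equation $z^2+7=2^r$, all of whose solutions are known to be $(z,r)\in\{(1,3),(3,4),(5,5),(11,7),(181,15)\}$. Discarding $(1,3)$, for which $r=s$, leaves $(r,s,z)=(4,3,3),(5,3,5),(7,3,11),(15,3,181)$; the first is the member of the infinite family with $t=2$, and the other three are the listed sporadic cases.

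If $s\ge 4$ then $D=2^s-1>7$, and I would exhibit the two solutions $(z,r)=(1,s)$ (since $1+(2^s-1)=2^s$) and $(z,r)=(2^{s-1}-1,\,2s-2)$ (since $(2^{s-1}-1)^2+(2^s-1)=2^{2s-2}$); these are distinct because $2s-2>s$. By Theorem 4 of \cite{Bk1}, for $D>7$ the equation $z^2+D=2^r$ has at most two solutions in positive integers, so these are all of them. The first is excluded by the hypothesis $r>s$; the second, written with $t=s-1>1$, is exactly the family member $(r,s,z)=(2t,t+1,2^t-1)$. This exhausts all cases.

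I expect the main obstacle to be precisely the step for $s\ge 4$: no elementary hand computation disposes of the infinitely many candidate values of $s$ at once, so some finiteness input about Ramanujan--Nagell type equations is unavoidable. The whole point is that one does not need the non-elementary bound of \cite{Bk2} used by Szalay, since the elementary Theorem 4 of \cite{Bk1}, which bounds the number of solutions of $x^2+D=2^n$ directly, already suffices. Care is needed to confirm that the exact hypotheses of \cite[Theorem 4]{Bk1} (the relevant residue class of $D$, and whether solutions are counted by $x$ or by $n$) cover our situation, and that passing between $2^r-2^s+1=z^2$ with $r>s$ and $z^2+D=2^r$ neither gains nor loses solutions. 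Equivalently, one may simply follow Szalay's proof of this result verbatim and replace the application of \cite{Bk2} by \cite[Theorem 4]{Bk1} at that single point.
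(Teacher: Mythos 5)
Your proposal is correct in substance, but note that the paper does not actually prove this lemma: it states only that Szalay's proof in \cite{Sz} goes through verbatim once the non-elementary bound from \cite{Bk2} is replaced by the elementary Theorem 4 of \cite{Bk1}, which is exactly the fallback you offer in your last sentence. Your primary argument, however, is a genuinely different and cleaner route than Szalay's. Szalay's proof (like the proof of Lemma~7 given in the paper for $2^r+2^s+1=z^2$) proceeds by writing $z=2^tk\pm 1$, extracting successive $2$-adic conditions to force $r$ to be large relative to $s$, and then contradicting an upper bound on $r$ in terms of $D=2^s-1$; you instead reduce directly to the generalized Ramanujan--Nagell equation $z^2+(2^s-1)=2^r$, dispose of $s\le 2$ by the $2$-adic valuation of $z^2-1$, quote the classical solution list for $D=7$, and for $s\ge 4$ exhibit the two known solutions $(z,r)=(1,s)$ and $(2^{s-1}-1,2s-2)$ so that the ``at most two solutions for odd $D\ne 7$'' count (Ap\'ery's theorem, which is what the elementary Theorem 4 of \cite{Bk1} delivers) finishes immediately. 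What your route buys is brevity and transparency --- the infinite family and the sporadic solutions appear for structural reasons rather than emerging from a case analysis; what it costs is that the entire weight rests on the exact hypotheses of \cite[Theorem 4]{Bk1}, a dependence you rightly flag, whereas Szalay's argument only needs a (weaker) bound on the exponent. Your bookkeeping is sound: $z=1$ is exactly the solution excluded by $r>s$, the $t=2$ member of the family absorbs $(4,3,3)$ from the $D=7$ list, and $2s-2>s$ for $s\ge 4$ keeps the two exhibited solutions distinct.
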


Lemma 6 is the only result from \cite{Sz} which we will need for Theorems 1 and 2.  For Theorem 3 we will use a further result from \cite{Sz} for which we have not found a purely elementary proof.  However, we do give a shorter simpler proof:  

\begin{Lemma}{(\cite{Sz})}  
\label{L7}
The equation   
\begin{equation}  2^r + 2^s + 1 = z^2  \label{1.1} \end{equation}  
has no solutions in positive integers $(r,s,z)$ with $r \ge s $ except for the following cases:  
\begin{equation} (r,s,z) = (2t, t+1, 2^t+1) { \rm \  for\ positive\ integer\ } t  \label{A} \end{equation}  
\begin{equation}  (r,s,z) = ( 5,4,7) \label{B} \end{equation}  
\begin{equation}  (r,s,z) = (9,4,23)  \label{C} \end{equation}  
\end{Lemma}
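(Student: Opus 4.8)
The plan is to mimic the structure of the proof of Lemma~\ref{L4}, reducing Equation~(\ref{1.1}) to a Pell-type equation and then eliminating all but the listed solutions by a combination of elementary arithmetic and one appeal to a sharp external bound. First I would dispose of small cases and parity: writing $2^r + 2^s = z^2 - 1 = (z-1)(z+1)$ with $z$ odd, one of $z\pm1$ is $2\pmod 4$, which quickly forces $s \le r$ and pins down the 2-adic valuation; the standard analysis (as in Szalay) shows that either $s$ is small (handled directly, yielding (\ref{B}) and (\ref{C}) and the start of family (\ref{A})) or we may assume $r > 2s$ or some similar inequality letting us set up a Pell equation. Concretely, if $s$ is even, put $X = z$, $Y = 2^{s/2}$, $D = 2^{r-s}+1$, so that $X^2 - DY^2 = 1$; if $s$ is odd, put $X=z$, $Y = 2^{(s-1)/2}$, $D = 2(2^{r-s}+1)$, again reaching $X^2 - DY^2 = 1$. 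In each case the fundamental solution of $U^2 - DV^2 = 1$ is visibly $(U_1,V_1) = (2^{r-s}+1, 1)$ when $s$ is even (and the analogous $(2^{r-s+1}+1,\,2^{(r-s-1)/2})$ when $s$ is odd), so $(X,Y) = (X_j, Y_j)$ with $X_j + Y_j\sqrt D = (U_1 + V_1\sqrt D)^j$ for some $j \ge 1$.

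Next I would run the divisibility argument of Lemma~\ref{L4}: since every prime factor of $Y$ is $2$ and $2 \mid D$, the machinery of Lemmas 1--3 of \cite{Sc} (or, in the $s$-odd case, Theorem 1 of \cite{Sto}) forces $j$ to be very small — $j=2$ in the even case, $j=1$ in the odd case — after ruling out $j>2$ by producing a primitive divisor $Y_{2q}/(qY_2)$ of $Y_j$ greater than $1$ and coprime to $2$, a contradiction since $Y_j$ is a power of $2$. This collapses the Pell relation to an explicit Diophantine equation: in the even case $2^{s/2} = Y_2 = 2(2^{r-s}+1)\cdot 1 \cdot$ (appropriate factor), which upon simplification reads $2^{s/2} = 2\cdot 2^{(r-s)/2}$ or the like, forcing $r,s$ into the one-parameter family (\ref{A}); the odd case either gives a contradiction outright or again lands in (\ref{A}) or one of (\ref{B}), (\ref{C}).

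The genuinely hard part — and the reason this lemma, unlike Lemma~\ref{L4}, is not claimed to have a strictly elementary proof — is the residual case where the reduction above does \emph{not} apply, i.e. where $r - s$ is itself too small for the Pell setup to be nontrivial, forcing one to confront an equation of the shape $z^2 = 2^a\cdot m^2 + \text{(small)}$ directly. Here Szalay invoked a bound of Beukers~\cite{Bk2} on $x^2 + C = y^n$; I would instead substitute the sharp result of Bauer and Bennett~\cite{BB}, which bounds the exponent $n$ in $x^2 + 2^a = y^n$ in terms of $a$ alone and is strong enough to leave only finitely many $(r,s)$ to check by hand, all of which turn out to be among (\ref{A})--(\ref{C}). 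I would also note, following the Introduction's remark, that the auxiliary results of \cite{Sz} mapping one solution set onto another are not needed for the bare statement: the argument of Szalay's Lemma 8 alone, feeding into the Bauer--Bennett bound, suffices. The main obstacle is thus not the Pell reduction (routine, by analogy with Lemma~\ref{L4}) but verifying that the Bauer--Bennett bound is genuinely applicable and sharp enough here, and that the finitely many surviving $(r,s)$ contain no solutions beyond the three listed families.
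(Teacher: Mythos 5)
Your outline diverges from the paper's proof in a way that opens a genuine gap. The paper does \emph{not} reduce (\ref{1.1}) to a Pell equation at all: it runs the 2-adic argument of Szalay's Lemma 8 directly. One writes $z = 2^t k \pm 1$ with $k$ odd and $t$ maximal, expands $z^2 = 2^{2t}k^2 \pm (k\mp1)2^{t+1} + 2^{t+1}+1$, and compares valuations with $2^r+2^s+1$ to force $s=t+1$; iterating (each time peeling off the next dyadic digit of $k$, with the equality cases of the resulting inequalities producing exactly (\ref{B}) and (\ref{C})) yields the lower bound $r \ge 6t-2 = 6s-8$. Corollary 1.7 of \cite{BB} then gives the upper bound $r < 4s$, and the two bounds are incompatible for $s>3$. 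Your proposal contains neither the expansion argument nor the lower bound $r \ge 6s-8$, and without a lower bound of that strength the Bauer--Bennett inequality alone eliminates nothing; ``finitely many surviving $(r,s)$ to check by hand'' is asserted but not produced by anything in your outline.

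The Pell reduction you propose as the centerpiece cannot be made to work here. First, your claimed fundamental solution is wrong: $(U,V)=(2^{r-s}+1,1)$ gives $U^2-DV^2 = D(D-1)\ne 1$; the fundamental solution of $U^2-(m^2+1)V^2=1$ is $(2m^2+1,2m)$, and this is only ``visible'' when $r-s$ is even so that $D=2^{r-s}+1$ has the shape $m^2+1$. The sporadic solutions (\ref{B}) and (\ref{C}) have $r-s=1$ and $r-s=5$, both odd, and there $(z,2^{s/2})=(7,4)$ and $(23,4)$ are essentially the fundamental solutions for $D=3$ and $D=33$ --- i.e., the exceptional cases live precisely in the regime where the fundamental solution of the Pell equation is not known a priori and your reduction says nothing. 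Second, even where the setup applies, the endgame of Lemma~\ref{L4} collapses to $w^{s/2}=2w^{(r-s)/2}$, which for $w=2$ is not a contradiction but exactly the infinite family (\ref{A}) ($r=2t$, $s=t+1$); this is why Lemma~\ref{L4} excludes $w=2$ and why Szalay's equations require a separate method. Your residual case is also misidentified: the obstruction is the parity of $r-s$ (and the unknown fundamental unit), not $r-s$ being ``too small.''
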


\begin{proof}  
Assume (\ref{1.1}) has a solution that is not one of (\ref{A}), (\ref{B}), or (\ref{C}).  It is an easy elementary result that the only solution to (\ref{1.1}) with $r=s$ is given by Case (\ref{A}) with $t = 1$, so we can assume hereafter $r>s$.  

Considering (\ref{1.1}) modulo 8, we get $s > 2$.  If $s=3$, then $2^r = z^2 - 2^3 - 1 = (z+3) (z-3)$, giving $z = 5$, which is Case (\ref{A}) with $t=2$, so we can assume hereafter $s>3$.   
  
Write $z = 2^t k \pm 1$ for $k$ odd and the sign chosen to maximize $t>1$.  In what follows, we will always take the upper sign when $z \congruent 1 \bmod{4}$ and the lower sign when $z \congruent 3 \bmod{4}$.      
  
We have   
\begin{equation}  2^r + 2^s + 1 = 2^{2t} k^2 \pm (k \mp 1) 2^{t+1} + 2^{t+1} + 1.   \label{(2.1)}  \end{equation}  
From this we see $s=t+1$ so that $t \ge 3$.  Now (\ref{(2.1)}) yields $r \ge 2t-1$ with equality only when $t=3$, $k=1$, and $z \congruent 3 \bmod 4$, which is Case (\ref{B}), already excluded.  So $r \ge 2t$, hence $r > 2t$ since Case (\ref{A}) has been excluded.  So now   
$$ k \mp 1 = 2^{t-1} g  {\rm \  for\ some\ odd\ } g>0.  $$  
We have  
\begin{equation}  2^{r-2t} = k^2 \pm g = 2^{2t-2} g^2  \pm 2^t g + 1 \pm g.  \label{(2.2)}\end{equation} 
(\ref{(2.2)}) yields $r-2t \ge 2t - 3$ with equality only when $t=3$, $g=1$, and $z \congruent 3 \bmod 4$, which is Case (\ref{C}), already excluded.  So now $g \pm 1 = 2^t h$ for some odd $h>0$. So we must have $g \ge 2^t \mp 1$.  Assume $z \congruent 3 \bmod{4}$.  Then from (\ref{(2.2)}) we derive   
\begin{equation}  2^{r-2t} > g^2 (2^{2t-2} -1) >  2^{2t} 2^{2t-3} = 2^{4t -3}.  \label{(2.3)}\end{equation}  
Now assume $z \congruent 1 \bmod{4}$.  Then  
$$ 2^{r-2t} > 2^{2t-2} g^2 \ge 2^{2t-2} ( 2^{2t} - 2^{t+1} + 1) > 2^{2t-2} 2^{2t-1} = 2^{4t-3}.  $$  
In both cases we have   
\begin{equation} r \ge 6t - 2 = 6 s - 8.  \label{(2.4)}\end{equation}    
  
Now we can use Corollary 1.7 in Bauer and Bennett \cite{BB}:   
$$r < {2 \over 2 - 1.48} { \log(2^s+1) \over \log(2) }. $$  
Thus,   
$$ r < {1 \over 0.26} { \log(2^s+1) \over \log(2^s) } s  < { 1 \over 0.26} {\log(17) \over \log(16) }  s  < 4s. $$   
Combining this with (\ref{(2.4)}) we obtain $s < 4$ which is impossible since $s>3$.  
\end{proof}

\section{Proofs of Theorems 1 and 2}  

Write $v_a(b)$ to mean the highest power of $a$ dividing $b$ for positive prime $a$ and nonzero integer $b$; thus, $a^{v_a(b)} || b$.

\begin{proof}[Proof of Theorem 1:]
If $a \mid b$, then, in any solution of (\ref{1}), $v_a(b^y) = v_a(c)$, so that (\ref{1}) cannot have two solutions $(x,y)$.  So we assume from here on that $(a,b)=1$.  

Clearly (\ref{1}) has at most one solution with $y=0$.  Applying Theorem 3 of \cite{Sc} and noting that none of the five exceptional cases of Theorem 3 of \cite{Sc} has a further solution with $2 \mid y > 0$ (see, for example, the proof of Theorem 5 of \cite{Sc}), we see that, if (\ref{1}) has more than two solutions in nonnegative integers $x$ and $y$, we must have exactly one solution with $y=0$ and exactly two further solutions.  
If these two further solutions are among the exceptional cases of Theorem 3 of \cite{Sc}, a solution with $y=0$ occurs only when $(a,b,c) = (2,5,3)$.  So from here on we exclude the five exceptional cases of Theorem 3 of \cite{Sc} and assume that we have three solutions $(x_1, y_1)$, $(x_2, y_2)$, $(x_3, y_3)$ with $y_1=0$ and $2 \notmid y_2-y_3$.  Without loss of generality, assume $2 \mid y_2= 2t$ for some integer $t$.  Then we have a solution to the equation 
\begin{equation}  b^{2t} + c = a^{x_2}, \label{2} \end{equation} 
as well as a solution to the equation 
\begin{equation} 1 + c = a^{x_1}. \label{3} \end{equation}   

Applying Theorem 4 of \cite{Sc} to the solutions $(x_2, y_2)$ and $(x_3, y_3)$ and noting that all the cases listed in (22) of \cite{Sc} have already been excluded, we see that $a$ must be odd.  
Combining (\ref{2}) and (\ref{3}), we get 
$$ a^{x_2} - a^{x_1} + 1 = b^{2t}, $$
contradicting Lemma~\ref{L3} unless $(a,b,c) = (3, 5, 2)$ or $(5, 11, 4)$.  Considering each of these two cases modulo 3, we see that neither case allows 
a solution to (\ref{1}) with $y$ odd, so by Theorem 3 of \cite{Sc} neither case has a third solution.  

It remains to show that there are an infinite number of $(a,b,c)$ for which (\ref{1}) has two solutions by noting that, for a given choice of $a$, $x_1$, $x_2$, we simply let $b= a^{x_2} - a^{x_1} + 1$ and $c= a^{x_1} -1$.      
\end{proof}

\begin{proof}[Proof of Theorem 2:]
Clearly three solutions are impossible if $p=q$, so we can assume $p$ and $q$ are distinct primes.  
Excluding the exceptions listed in the theorem, assume we have more than two solutions to (\ref{5}).  Clearly there is at most one solution for which $\min(x,y)=0$.  Noting that the exceptional cases of Theorem 5 of \cite{Sc} have been excluded, we can assume we have exactly one solution in which $\min(x,y)=0$ and exactly two further solutions.   After Theorem~\ref{Thm1} above, we see that, without loss of generality, it suffices to consider just two cases.

{\it Case 1:}  Assume (\ref{5}) has exactly three solutions in the following form:  
\begin{equation}    q^{y_1} + c = p^{x_1},   \label{6}  \end{equation}
\begin{equation}    p^{x_2} + c = q^{y_2},   \label{7}  \end{equation}  
\begin{equation}    1 + c = p^{x_3},  \label{8}  \end{equation}
where $x_i > 0$ and $y_j > 0$ for $1 \le i \le 3$, $1 \le j \le 2$.  

Consideration modulo 2 gives $q>2$.  Assume first also $p>2$.  Substituting (\ref{8}) into (\ref{6}) and (\ref{7})  we get $q^{y_1} \congruent 1 \bmod p$ and $q^{y_2} \congruent -1 \bmod p$, so $2 \mid y_1 = 2k$ for some positive integer $k$.  But then 
\begin{equation*} q^{2k} = p^{x_1} - c = p^{x_1} - p^{x_3} + 1, \end{equation*}
contradicting Lemma~\ref{L3} unless $(p,q,c)=(3,5,2)$ or $(5,11,4)$.  The case $(3,5,2)$ has been excluded and the case $(5,11,4)$ makes (\ref{7}) impossible modulo 11.  

So we can assume $p=2$.  
If $x_3 = 1$, then $c=1$, and it is a familiar elementary result that we must have $q=3$, giving an excluded case.  
So we can assume $x_3 \ge 2$ and also $x_1 \ge 3$.  

If $x_2 \ge 2$, then, substituting (\ref{8}) into (\ref{6}) and (\ref{7}) we get $q^{y_1} \congruent 1 \bmod 4$ and $q^{y_2} \congruent 3 \bmod 4$, so that $2 \mid y_1$, violating Lemma~\ref{L6} unless $q^{y_1/2} = 2^{x_3 - 1} -1$ for $x_3 > 3$, or $c=7$ with $q=3$, 5, 11, or 181.  Since we have $q \congruent 3 \bmod 4$ and have excluded the cases $(p,q,c) = (2, 3, 7)$ and $(2,11,7)$, we are left with $x_3 > 3$, $y_1 =2$, and $q = 2^{x_3 - 1} - 1$ (noting $q^{y_1/2} $ cannot be a perfect power).  
In this case, $\Legendre{c}{q} = \Legendre{2q+1}{q} = 1$, making (\ref{7}) impossible since also $\Legendre{2}{q} = 1$ and $q \congruent 3 \bmod 4$.  

It remains to consider $x_2 = 1$, in which case $q^{y_2} = 2^{x_3} + 1$.  $y_2 > 1$ requires $q^{y_2} = 9$, giving $(p,q,c)=(2,3,7)$ which has already been excluded.  So $q^{y_2} = q = F$, a Fermat prime, giving the final exceptional case in the formulation of Theorem 2 (note the case $x_3 = 1$ has already been dealt with).   This completes the proof of Case 1.  

{\it Case 2:}  Assume (\ref{5}) has exactly three solutions in the following form:  
\begin{equation}    p^{x_1} + c = q^{y_1},   \label{9}  \end{equation}
\begin{equation}    p^{x_2} + c = q^{y_2},   \label{10}  \end{equation}  
\begin{equation}    1 + c = p^{x_3},  \label{11}  \end{equation}
where $x_i > 0$ and $y_j > 0$ for $1 \le i \le 3$, $1 \le j \le 2$.

By Theorem 3 of \cite{Sc} we have $ 2 \notmid x_1 - x_2$, noting that the exceptional cases of Theorem 3 of \cite{Sc} for which $c \le 5$ have been excluded, while the exceptional cases of Theorem 3 of \cite{Sc} for which $c > 5$ do not allow a solution with $\min(x,y) = 0$.  Consideration modulo 2 gives $q>2$. 

Assume first $p>2$.  
Substituting (\ref{11}) into (\ref{9}) and (\ref{10}) we find $q^{y_1}\congruent q^{y_2} \congruent -1 \bmod p$, so that 
$v_2(y_1) = v_2(y_2)$.  So $v_2(q^{y_1}-1) = v_2(q^{y_2} - 1)$.  Now rewrite (\ref{9}) and (\ref{10}) as 
\begin{equation}  ( p^{x_1} - 1) + c = (q^{y_1} - 1), \label{9a} \end{equation} 
\begin{equation}  ( p^{x_2} - 1) + c = (q^{y_2} - 1). \label{10a} \end{equation} 
If $v_2(c) = v_2( q^{y_1} - 1) = v_2(q^{y_2} - 1)$, then $v_2( p^{x_1} - 1) > v_2(c)$ and $v_2( p^{x_2} - 1) > v_2(c)$.  But then, since at least one of $x_1$ and $x_2$ is odd, we get $v_2( p-1) > v_2( c)$, contradicting (\ref{11}).  On the other hand, if $v_2(c) \ne v_2( q^{y_1} -1)$, then we must have $v_2(p^{x_1} - 1) = v_2(p^{x_2} -1)$, violating $ 2 \notmid x_1 - x_2$.   

So we must have $p=2$.  Recalling $2 \notmid x_1 - x_2$, take $2 \notmid x_1$, $2 \mid x_2$.  Consideration modulo 3 gives $q \congruent 2 \bmod 3$, $2 \notmid y_1$, $2 \mid y_2$.  Now (\ref{10}) give $c \congruent 1 \bmod 4$, so that (\ref{11}) gives $c=1$, and it is a familiar elementary result that we must have $q=3$, giving an excluded case.  
\end{proof}

\section{Proof of Theorem 3}  

\bigskip

We will use the following lemma based on a result of Mignotte \cite{Mi} as used by Bennett \cite{Be}.  

\begin{Lemma} \label{Lem:M} 
Let $a>1$, $b>1$, $c>1$, $x>0$, and $y>0$ be integers such that $(a,b) = 1$ and 
$$ a^x - b^y = c. $$ 
Let $G = y / \log(a)$.  Then either 
\begin{equation} G < 2409.08 \label{1m} \end{equation}
or 
\begin{equation}  G < \frac{2 \log(c)} {\log(a) \log(b)} + 22.997 ( \log(G) + 2.405)^2.  \label{2m} \end{equation}
Also when $G = x / \log(b)$ we have  (\ref{1m}) or (\ref{2m}).  
\end{Lemma}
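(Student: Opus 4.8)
The plan is to run a standard linear-forms-in-two-logarithms argument on $\Lambda := x\log a - y\log b$, combining the elementary upper bound coming from $a^x - b^y = c$ with the lower bound of Mignotte \cite{Mi} in the explicit shape Bennett uses in \cite{Be}. Since $a^x = b^y + c$ with $c \ge 2$ and $(a,b)=1$, we have $a^x \ne b^y$, so $\Lambda \ne 0$; in fact $\Lambda = \log(1 + c/b^y)$, whence $0 < \Lambda \le c/b^y$. The argument splits according to whether $b^y \le c$ or $b^y > c$. In the first case $a^x = b^y + c \le 2c$, so $y\log b \le \log(2c) \le 2\log c$ and $x\log a \le 2\log c$ (using $c \ge 2$); dividing by $\log a\log b$ and noting the second summand of (\ref{2m}) is nonnegative gives (\ref{2m}) both for $G = y/\log a$ and for $G = x/\log b$. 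So assume henceforth $b^y > c$; then $0 < \Lambda < \log 2$, hence $\log\Lambda \le \log c - y\log b$.

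From $a^x = b^y + c < 2b^y$ we get $x\log a < y\log b + \log 2$, so, using $\log a, \log b \ge \log 2$,
\begin{equation*}
\frac{y}{\log a} \;<\; \frac{x}{\log b} \;<\; \frac{y}{\log a} + \frac{\log 2}{\log a\log b} \;<\; \frac{y}{\log a} + 1.45 .
\end{equation*}
Hence the quantity denoted $b'$ in Mignotte's estimate, $b' := \frac{y}{\log a} + \frac{x}{\log b}$, satisfies $2G < b' < 2G + 1.45$ when $G = y/\log a$, and $2G - 1.45 < b' < 2G$ when $G = x/\log b$. Now apply the Mignotte--Bennett lower bound to $\Lambda$, taking $\alpha_1 = b$, $\alpha_2 = a$, $b_1 = y$, $b_2 = x$, and $\log A_1 = \log b$, $\log A_2 = \log a$ (permissible because coprime integers exceeding $1$ are multiplicatively independent); this yields
\begin{equation*}
\log\Lambda \;\ge\; -22.997\,(\log a)(\log b)\bigl(\max\{\log b' + \lambda_0,\ E_0\}\bigr)^2
\end{equation*}
for the absolute constants $\lambda_0, E_0$ of that theorem. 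Comparing with $\log\Lambda \le \log c - y\log b$ and dividing by $\log a\log b$ gives, when $G = y/\log a$,
\begin{equation*}
G \;\le\; \frac{\log c}{\log a\log b} + 22.997\,\bigl(\max\{\log b' + \lambda_0,\ E_0\}\bigr)^2 .
\end{equation*}

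Now the dichotomy. If the maximum is attained at $E_0$, then $\log b' + \lambda_0 \le E_0$, so $b'$---and hence $G < b'/2$---is at most an absolute constant, which one checks lies below $2409.08$; this gives (\ref{1m}). Otherwise the maximum is $\log b' + \lambda_0$, and assuming also $G \ge 2409.08$ (else (\ref{1m}) holds), the estimate $b' < 2G + 1.45 < 2.001\,G$ gives $\log b' + \lambda_0 < \log G + \log 2 + \lambda_0 < \log G + 2.405$, so the last display becomes $G < \frac{2\log c}{\log a\log b} + 22.997(\log G + 2.405)^2$, which is (\ref{2m}). The case $G = x/\log b$ follows symmetrically: rearranging the last display gives $y\log b \le \log c + 22.997(\log a)(\log b)(\max\{\cdots\})^2$, whence $x\log a < y\log b + \log 2$ forces $\frac{x}{\log b} < \frac{2\log c}{\log a\log b} + 22.997(\max\{\cdots\})^2$; and since now $b' < 2G$ one again gets $\max\{\cdots\} \le \log G + 2.405$ once $G \ge 2409.08$.

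The step I expect to be the real obstacle is pinning down the constants: one must invoke the precise statement of Mignotte's theorem as quoted by Bennett, verify that its constants satisfy $\log 2 + \lambda_0 \le 2.405$ and force the absolute bound $< 2409.08$ in the $E_0$-branch---so that $22.997$ and $2409.08$ are exactly the correct rounded values---and handle the boundary hypotheses of the two-logarithm estimate when $a$ or $b$ equals $2$, where $\log A_i = \log 2$ sits at the smallest permitted value and may slightly affect the admissible constants. The hypotheses $c > 1$ and $x, y > 0$ are used only to secure $\log c \ge \log 2$, $G > 0$, and $\Lambda \ne 0$.
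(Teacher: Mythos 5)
Your argument reconstructs from scratch what the paper simply imports: the dichotomy (\ref{1m})/(\ref{2m}) for $G = x/\log(b)$ is taken directly from Equation (11) of \cite{ScSt2} (itself derived from Mignotte's bound in the shape Bennett uses), and the only new content of the paper's proof is the transfer to $G = y/\log(a)$. That transfer is done not by rerunning the linear-forms argument symmetrically, as you do, but by a monotonicity trick: since $a^x > b^y$ one has $y/\log(a) < x/\log(b)$, and since $G - 22.997(\log G + 2.405)^2$ is increasing for $G \ge 2409.08$, simultaneous failure of (\ref{1m}) and (\ref{2m}) at $G = y/\log(a)$ would force their simultaneous failure at the larger value $x/\log(b)$, contradicting the cited result. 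Your symmetric derivation of the $y/\log(a)$ case is structurally sound and is a legitimate alternative to this; everything upstream of the appeal to Mignotte --- the bound $0 < \Lambda \le c/b^y$, the split according to whether $b^y \le c$, the comparison of $b'$ with $2G$ --- is correct.

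The genuine gap is exactly where you flagged it, and it is not cosmetic: the entire content of this lemma lives in the constants $22.997$, $2.405$, and $2409.08$, and none of them is verified. Moreover, at least one of your proposed mechanisms for producing them appears not to work. In the version of Mignotte's lower bound that Bennett quotes, the maximum is taken against a floor of $21/D = 21$ rather than a small $E_0$, and the leading constant is $24.34$ rather than $22.997$; with that statement your ``$E_0$-branch'' would only bound $b'$ by roughly $e^{20.86}$ and hence $G$ by something of order $10^{8}$ --- nowhere near $2409.08$. So the threshold $2409.08$ cannot arise in the way you describe; it is an artifact of the particular computation carried out in \cite{ScSt2}, and without reproducing that computation (or citing it, as the paper does) the stated numerical inequalities are not established.
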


\begin{proof}  
When $G= x/ \log(b)$ the lemma can be derived in essentially the same way as Equation (11) of \cite{ScSt2}.  
Now assume both (\ref{1m}) and (\ref{2m}) fail to hold for $G = y / \log(a)$, so that (\ref{1m}) fails to hold for $G = x / \log(b)$.  But if (\ref{2m}) fails to hold for $G = y / \log(a) \ge 2409.08$, it must also fail to hold for any $G > y / \log(a)$, so that (\ref{2m}) fails to hold for $G = x / \log(b)$, a contradiction since we have shown at least one of (\ref{1m}) or (\ref{2m}) must hold for $G = x / \log(b)$.     
\end{proof}   

\begin{proof}[Proof of Theorem 3:]  
We will first show that the exceptional $(p,q,c)$ listed in the formulation of Theorem 3 are the only $(p,q,c)$ which could have three or more solutions to (\ref{1c}); then, at the end of the proof, we will find all solutions $(x,y)$ for these $(p,q,c)$. 

The exceptional cases of Theorems 3 and 4 of \cite{Sc}, Theorem 7 of \cite{ScSt}, and Theorem 2 of the present paper are all included in the list of exceptions of the formulation of Theorem 3 above.  So in what follows we will use all these results without explicitly dealing with the exceptional $(p,q,c)$.  

Note that (\ref{1c}) can have at most two solutions with $\min(x,y)=0$.  

We first handle the cases $(p,q) = (2,3)$, $(3,2)$, $(2,5)$, and $(5,2)$.  If one of these cases gives three solutions to (\ref{1c}), then $c$ is odd and there is at most one solution with $\min(x,y) = 0$, unless $c=3$ which gives the excluded case $(p,q,c) = (2,5,3)$ listed in the formulation of the theorem.  
So when (\ref{1c}) has more than two solutions with $\min(p,q)=2$ and $\max(p,q) \in \{ 3,5 \}$, we can assume we have at least two solutions for which $\min(x,y)>0$.  
Now Theorem 4 of \cite{Sc} and Pillai's results in \cite{Pi} suffice to give all $(p,q,c)$ such that $(p,q)=(2,3)$ or $(3,2)$ and (\ref{1c}) has at least two solutions for which $\min(x,y)>0$, and it is easily determined which of these $(p,q,c)$ give more than two solutions to (\ref{1c}) in nonnegative integers $x$ and $y$; we list such $(p,q,c)$ in the formulation of Theorem 3.  The methods of Pillai \cite{Pi} can be used in just the same way to handle the case $(p,q)=(2,5)$ or $(5,2)$, so that, again using also Theorem 4 of \cite{Sc}, we can list all $(p,q,c)$ such that $(p,q)=(2,5)$ or $(5,2)$ and (\ref{1c}) has more than two solutions.  So from here on we will assume
\begin{equation}  p=2 \implies q > 5, q=2 \implies p>5. \label{I} \end{equation} 

Also, in the following search for $(p,q,c)$ allowing three or more solutions to (\ref{1c}), we will exclude all the exceptional cases listed in Theorem 3 from consideration.  

After Theorem 7 of \cite{ScSt} and Theorem 2 of the present paper it suffices to consider only cases in which (\ref{1c}) has three solutions at least one of which has $\min(x,y)=0$ and at least one of which has $(u,v) = (0,0)$.  We divide the proof into thirteen such cases which can be seen to include all possibilities.  In each of these cases, $p \ge 2$ and $q \ge 2$ are distinct primes unless otherwise indicated (in the first three cases we specify $\min(p,q)>2$).    In the first nine cases, we assume exactly one of the exponents $\{x_1,x_2, x_3, y_1, y_2, y_3\}$ is zero and the rest are positive.  In the final four cases, more than one of the exponents is zero.  

Note: in all thirteen cases the explicitly written exponents $x_i$ and $y_j$ are assumed to be greater than zero ($1 \le i \le 3$, $1 \le j \le 3$).  Terms with exponent zero are written simply as \lq\lq 1''.

\bigskip\bigskip 

\noindent {\it Case 1}
\begin{equation}    1 + c = q^{y_1}    \label{A1} \end{equation} 
\begin{equation}    p^{x_2} + q^{y_2} = c   \label{A2} \end{equation}
\begin{equation}    q^{y_3} + c = p^{x_3}  \label{A3} \end{equation}
where $p$ and $q$ are odd primes.  
Substituting (\ref{A1}) into (\ref{A2}) and (\ref{A3}), we find $q \mid p^{x_2} + 1$ and $q \mid p^{x_3} + 1$, so that $v_2(x_2) = v_2(x_3)$, giving $p^{x_2} \congruent p^{x_3} \bmod 4$.  So
$$ q^{y_3} = p^{x_3} - c \congruent p^{x_2} - c = - q^{y_2} \bmod 4, $$
so 
\begin{equation} q \congruent 3 \bmod 4, 2 \notmid y_3 - y_2.  \label{A4} \end{equation}   
From (\ref{A1}) we have $\Legendre{c}{q} = -1$, so that, from (\ref{A2}) and (\ref{A3}), 
\begin{equation} \Legendre{p}{q} = -1.  \label{A5} \end{equation}
If $p \congruent 3 \bmod 4$, then (\ref{A5}) requires $\Legendre{q}{p} = 1$ so (\ref{A3}) requires $\Legendre{c}{p} = -1$ while (\ref{A2}) requires $\Legendre{c}{p} = 1$, a contradiction.
If $p \congruent 1 \bmod 4$, then (\ref{A5}) requires $\Legendre{q}{p} = -1$, while (\ref{A2}) and (\ref{A3}) require $\Legendre{c}{p} = \Legendre{q^{y_2}}{p} = \Legendre{q^{y_3}}{p}$, 
so that $2 \mid y_3 - y_2$, contradicting (\ref{A4}). 

\bigskip \bigskip

\noindent {\it Case 2}
\begin{equation}    1+ q^{y_1} = c   \label{B1} \end{equation}
\begin{equation}    p^{x_2} + q^{y_2} = c   \label{B2} \end{equation}
\begin{equation}    q^{y_3} + c = p^{x_3}    \label{B3} \end{equation} 
where $p$ and $q$ are odd primes.  
Substituting (\ref{B1}) into (\ref{B2}) and (\ref{B3}), we find that, by Lemma 3, $x_2$ is odd unless $(p,q,c)= (5,3,28)$ or $(11,5,126)$, and, by Lemma 5, $x_3$ is odd, making $(p,q,c)=(5,3, 28)$ impossible modulo 3; also (\ref{B3}) is impossible modulo 11 if $(p,q,c)=(11,5,126)$.  So we can assume $x_2$ and $x_3$ are both odd.  Rewrite (\ref{B2}) and (\ref{B3}) as 
\begin{equation} (p^{x_2} - 1) + (q^{y_2} + 1) =  c  \label{B2a} \end{equation} 
and 
\begin{equation}  (q^{y_3} - 1) + c = p^{x_3} - 1.  \label{B3a}   \end{equation}
Since $x_2$ and $x_3$ are both odd, $v_2(p^{x_2} - 1) = v_2(p^{x_3} - 1)$.  Suppose $v_2(p^{x_2} - 1) < v_2(c)$.  Then we must have, from (\ref{B2a}) and (\ref{B3a}), $v_2(q^{y_2} + 1) = v_2(p^{x_2} -1) = v_2(p^{x_3} -1) = v_2(q^{y_3} - 1)$; this is possible only if $ q \congruent 3 \bmod 4$ and $v_2(q^{y_2} + 1) = v_2(q^{y_3} - 1) = 1$ so we must have $v_2(p^{x_2} - 1) = v_2(p^{x_3} -1) = 1$.  So now write equations (\ref{B2}) and (\ref{B3}) as
\begin{equation} (p^{x_2} + 1) + (q^{y_2} -1) = c \label{B2b} \end{equation} 
and
 \begin{equation}  (q^{y_3} + 1) + c = p^{x_3} + 1. \label{B3b} \end{equation} 
Note that in both (\ref{B2b}) and (\ref{B3b}) all three terms have valuation base 2 greater than 1 when $v_2(p^{x_2} - 1) < v_2(c)$.  Therefore, $y_1$ and $y_3$ are both odd so that $v_2(c) = v_2(q^{y_3} + 1)$.  Therefore, from (\ref{B3b}), we have $v_2(p^{x_3} + 1) > v_2(c)$ and since $v_2(p^{x_3} + 1) = v_2(p^{x_2} + 1)$, we  have  $v_2(p^{x_2} + 1) > v_2(c)$.  But we must also have $y_2$ even and $y_1$ odd so that $v_2(q^{y_2} -1) > v_2(c)$.  Thus (\ref{B2b}) becomes impossible, eliminating the possibility $v_2(p^{x_2} -1) < v_2(c)$. 

Now suppose $v_2(c) < v_2(p^{x_2} -1) = v_2(p^{x_3} -1)$.  Now from (\ref{B2a}) and (\ref{B3a}) we see that $v_2(c) = v_2(q^{y_2} + 1) = v_2(q^{y_3} - 1) = 1$.  
Now write (\ref{B2}) and (\ref{B3}) as  
\begin{equation} (p^{x_2} -1) + (q^{y_2} -1) = (c - 2)     \label{B2c}  \end{equation} 
and 
\begin{equation} (q^{y_3} +1) + (c-2) = p^{x_3} - 1.     \label{B3c}   \end{equation} 
Note that in both (\ref{B2c}) and (\ref{B3c}) all three terms have a valuation base 2 greater than 1.  We must have $q \congruent 3 \bmod 4$ with $v_2(q^{y_3} + 1) < v_2(q^{y_1} -1) = v_2(c-2)$, so that, from (\ref{B3c}), $v_2(c-2) > v_2(p^{x_3} -1) = v_2(p^{x_2} - 1)$, so that $v_2(q^{y_3} + 1) = v_2(p^{x_3} - 1) = v_2(p^{x_2} - 1)= v_2(q^{y_2} -1)$, which is impossible.  This eliminates the possibility $v_2(c) < v_2(p^{x_2} -1)$.  

So we are left with $v_2(c) = v_2(p^{x_2} - 1) = v_2(p^{x_3} -1)$.  
In this case from (\ref{B2a}) we see that $v_2(q^{y_2} + 1) > v_2(c)$ so that $q \congruent 3 \bmod 4$ and $v_2(c) = v_2(q^{y_1}+1)= 1$.  From (\ref{B3a}) we see that $v_2(q^{y_3} - 1) > 1$.  So we have 
\begin{equation} 2 \mid y_1,  2 \notmid y_2, 2 \mid y_3, 2 \notmid x_2, 2 \notmid x_3.  \label{L}  \end{equation}  
Recalling (\ref{B1}) and using (\ref{B3}) we see that consideration modulo 8 gives $p \congruent 3 \bmod 8$ so that (\ref{B2}) gives $ q \congruent 7 \bmod 8$, so that $q \ne 3$.  Now  consideration modulo 3 gives (recalling (\ref{B1}) and using (\ref{B3})) $p = 3$; also (recalling (\ref{B2})) $q \congruent 2 \bmod 3$.  To handle this case we make the following substitutions into Lemma~\ref{Lem:M} (noting $c>1$):  $a=3$, $b=q$, $x=x_3$, $y=y_3$.  We get either
\begin{equation} \frac{y_3}{ \log(3)} < 2409.08 \label{M1} \end{equation}
or 
\begin{equation}  \frac{y_3}{ \log(3)} <  \frac{2 \log(c)}{ \log(3) \log(q)} + 22.997 ( \log(y_3) - \log\log(3) + 2.405)^2. \label{M2} \end{equation}
From (\ref{B1}) and (\ref{B2}) we have $y_1 > y_2$.  From (\ref{B2}) and (\ref{B3}) we have $x_3 > x_2$.  By Lemma 12 of \cite{ScSt4} we must have 
\begin{equation} y_2 < y_1 <y_3,  \label{M2.5} \end{equation}
noting that none of the exceptional cases of Lemma 12 of \cite{ScSt4} fits Case 2. 

Combining (\ref{B2}) and (\ref{B3}) we obtain 
\begin{equation} 3^{x_2} (3^{x_3-x_2} - 1) = q^{y_2} (q^{y_3-y_2} +1).  \label{M3}  \end{equation}
If $q \congruent \pm 1 \bmod 9$ then (\ref{L}), (\ref{B1}), and (\ref{B3}) give $3^{x_3} \congruent 3 \bmod 9$ which is impossible.  So we can apply Lemma 1 of \cite{ScSt2} to (\ref{M3}) to see that 
\begin{equation} 3^{x_2 - 1} \mid y_3 - y_2. \label{M3.1} \end{equation}  
Now if $3^{x_2} < c/2$, then $q^{y_2} > c/2 > q^{y_1} / 2$, contradicting (\ref{M2.5}), so we can assume 
\begin{equation} 3^{x_2} > c/2. \label{M3.2} \end{equation} 
So now, using (\ref{M3.1}) and (\ref{M3.2}) and letting $k \ge 1$ be some real number, (\ref{M2}) becomes 
\begin{equation}  k \frac{3^{x_2 - 1}}{ \log(3)  } < \frac{ 2 ( \log(2) + x_2 \log(3) ) } { \log(3) \log(q) } + 22.997 ( \log(k) + (x_2 - 1) \log(3) - \log\log(3) + 2.405)^2.  \label{M3.5} \end{equation} 
If (\ref{M3.5}) holds for some fixed $x_2$, then it also holds for that $x_2$ taking $k=1$.  So (\ref{M3.5}), combined with (\ref{M1}), gives $x_2 \le 7$ (recalling $x_2$ odd).  Now 
\begin{equation} q^2 - q \le q^{y_1} - q^{y_2} = 3^{x_2} - 1 \le 2186,  \label{Q1} \end{equation} 
so that $  q \le 47$. 
We have already shown $q \congruent 7 \bmod 8$ and $q \congruent 2 \bmod 3$.  So $q=23$ or 47, both of which make (\ref{Q1}) impossible.  

\bigskip\bigskip

\noindent {\it Case 3}
\begin{equation}    p^{x_1} + (-1)^v  = c   \label{E1} \end{equation}
\begin{equation}    p^{x_2} + q^{y_2} = c   \label{E2} \end{equation}
\begin{equation}    q^{y_3} + c = p^{x_3},    \label{E3} \end{equation}
where $v \in \{ 0,1 \}$ and $p$ and $q$ are odd primes.  Consider first $v=1$.  Substituting (\ref{E1}) into (\ref{E2}) and (\ref{E3}) we find
$ q^{y_2} \congruent -1 \bmod p$ and $q^{y_3} \congruent 1 \bmod p$ so that $2 \mid y_3$ which, by Lemma 3, is possible only when $(p,q,c) = (3, 5, 2)$ or $(5,11,4)$, both of which cases are impossible since $c \le 4$ makes (\ref{E2}) impossible.  

Now consider $v=0$.  Substituting (\ref{E1}) into (\ref{E2}) and (\ref{E3}) we get $q^{y_2} \congruent 1 \bmod p$ and $q^{y_3} \congruent -1 \bmod p$ so that $2 \mid y_2$, which, by Lemma 3, is possible only when $(p,q,c)=(3,5,28)$ or $(5,11,126)$.  $(p,q,c) = (3,5,28)$ makes (\ref{E3}) modulo 8 incompatible with (\ref{E3}) modulo 5, while $(p,q,c) = (5, 11, 126)$ makes (\ref{E3}) modulo 8 incompatible with (\ref{E3}) modulo 3.  

\bigskip\bigskip

\noindent {\it Case 4}
\begin{equation}    2^{y_1} + (-1)^u = c   \label{F1} \end{equation}
\begin{equation}    p^{x_2} + 2^{y_2} = c   \label{F2} \end{equation}
\begin{equation}    2^{y_3} + c = p^{x_3},    \label{F3} \end{equation}
where $u \in \{0,1\}$.  From (\ref{F1}) and (\ref{F2}) we see that $y_1 \ge 3$ unless $(p,q,c)=(3,2,5)$, an excluded case.  Clearly $y_1 > y_2$ and $x_3 > x_2$, so that Lemma 12 of \cite{ScSt4} gives
\begin{equation}  y_2 < y_1 < y_3,  \label{F4}  \end{equation}
noting that the relevant exceptional cases of Lemma 12 of \cite{ScSt4} have already been excluded.    

Consider first $u=1$.  Substituting (\ref{F1}) into (\ref{F2}) and (\ref{F3}) and using (\ref{F4}), we find 
\begin{equation}  v_2(p^{x_2} + 1) = y_2 < y_1 = v_2(p^{x_3} + 1),  \label{F5} \end{equation}  
so that $p \congruent 3 \bmod 4$, $x_3$ is odd, and $x_2$ is even.  But this makes (\ref{F2}) impossible modulo 8 since $c \congruent 7 \bmod 8$ (recall $y_1 \ge 3$).  

Now consider $u=0$.  Substituting (\ref{F1}) into (\ref{F2}) and (\ref{F3}) and using (\ref{F4}), we find that 
\begin{equation*} v_2(p^{x_2} - 1) = y_2 < y_1 = v_2(p^{x_3} - 1)   \end{equation*}  
so that $2 \mid x_3$ which is impossible by Lemma 7 unless $(p,q,c) = (7,2,17)$, $(23,2,17)$, or $(2^t+1, 2, 2^{t+1}+1)$ where $t \ge 3$ (recall (\ref{I})).   The first two of these three cases make (\ref{F2}) impossible, while the third case is the already excluded $(p,q,c) = (F, 2, 2F -1)$.

\bigskip\bigskip

\noindent {\it Case 5}
\begin{equation}    2^{x_1} + (-1)^v = c   \label{G1} \end{equation}
\begin{equation}    2^{x_2} + q^{y_2} = c   \label{G2} \end{equation}
\begin{equation}    q^{y_3} + c = 2^{x_3},    \label{G3} \end{equation}
where $v \in \{0,1\}$.  
We see that $x_2 < x_1 < x_3$.  Also, $x_1 \ge 3$, otherwise (\ref{G2}) is impossible except when $(p,q,c) = (2,3, 5)$, which has been excluded.  Assume first $v=1$.  Then from (\ref{G1}) we get $c \congruent 7 \bmod 8$.  If $y_3$ is odd, then, from (\ref{G3}) we get $q \congruent 1 \bmod 8$ so that (\ref{G2}) becomes impossible modulo 8.  So $2 \mid y_3$ so that, using Lemma 6 and recalling (\ref{I}), we see from (\ref{G3}) that we must have $(p,q,c) = (2,11,7)$, $(2,181,7)$, or $(2,2^t -1, 2^{t+1} - 1)$ where $t \ge 3$.  The first two of these possibilities have $c = 7$, making (\ref{G2}) impossible, and the third possibility corresponds to the exceptional case $(2, M, 2M+1)$ which we have already excluded.  

So now assume $v=0$.  Substituting (\ref{G1}) into (\ref{G2}) and (\ref{G3}) we find that 
$$v_2(q^{y_2} -1) = x_2 < x_1 = v_2(q^{y_3} + 1), $$
which is possible only when $x_2 = 1$, so that $q=2^{x_1} -1$ and $c=2^{x_1} +1$, 
giving the exceptional case $(2, M, M+2)$, which has been excluded.

\bigskip\bigskip

\noindent {\it Case 6}
\begin{equation}    p^{x_1} + 1 = c   \label{J1} \end{equation}
\begin{equation}    q^{y_2} + c = p^{x_2}   \label{J2} \end{equation}
\begin{equation}    q^{y_3} + c = p^{x_3}    \label{J3} \end{equation}
By Theorem 4 of \cite{Sc}, $p>2$.  Substituting (\ref{J1}) into (\ref{J2}) and (\ref{J3}) we find $q^{y_2} \congruent q^{y_3} \congruent -1 \bmod p$, so that $2 \mid y_2-y_3$, contradicting Theorem 3 of \cite{Sc}.

\bigskip\bigskip

\noindent {\it Case 7}
\begin{equation}    q^{y_1} + 1 = c   \label{K1} \end{equation}
\begin{equation}    q^{y_2} + c = p^{x_2}   \label{K2} \end{equation}
\begin{equation}    q^{y_3} + c = p^{x_3}    \label{K3} \end{equation}
By Theorems 3 and 4 of \cite{Sc}, $p>2$ and $2 \notmid y_2-y_3$.  If $2 \mid x_2-x_3$, then $p^{x_2} \congruent p^{x_3} \bmod 3$ and $p^{x_2} \congruent p^{x_3} \bmod 4$, so that
$$ q^{y_2} = p^{x_2} - c \congruent p^{x_3} - c = q^{y_3} \bmod 12, $$
so that $q \congruent 1 \bmod 12$, $c \congruent 2 \bmod 12$, and (\ref{K2}) gives $p = 3$, contradicting Corollary 1.7 of \cite{Be}.  

So we must have $2 \notmid x_2-x_3$.  Without loss of generality take $x_2$ even and $x_3$ odd.  
Assume first $q>2$.  
Then from (\ref{K2}) we see that $q^{y_2} + q^{y_1} + 1 $ is a square, impossible by Lemma 5.  
So $q=2$, and we can use equations (2), (4), and (6a) of \cite{ScSt} to see that $2^{y_2} \mid\mid p-1$.  Now rewrite (\ref{K2}) as 
\begin{equation*}  2^{y_2} + (c-1) = (p^{x_2} - 1) \end{equation*} 
to see that we must have $y_1 = y_2$, making the left side of (\ref{K2}) less than $2p$, which is impossible.

\bigskip\bigskip

\noindent {\it Case 8}
\begin{equation}    p^{x_1} + 1 = c   \label{N1} \end{equation}
\begin{equation}    q^{y_2} + c = p^{x_2}   \label{N2} \end{equation}
\begin{equation}    p^{x_3} + c = q^{y_3}    \label{N3} \end{equation}
Assume first $p>2$.  Substituting (\ref{N1}) into (\ref{N2}) and (\ref{N3}) we find $q^{y_2} \congruent -1 \bmod p$ and $q^{y_3} \congruent 1 \bmod p$, so that $2 \mid y_3$, contradicting Lemma 5.

So $p=2$.  Assume first $x_1=1$ so that $c=3$.  Then $q^{y_2} \congruent 5 \bmod 8$, so that considering (\ref{N3}) modulo 8 we get $2 \notmid y_3$, $x_3 = 1$, $(p,q,c) = (2,5,3)$, an excluded case.  Assume next $x_1=2$ so that $c=5$.  If $q=3$, we have the excluded case $(p,q,c)=(2,3,5)$, so we can assume $q>3$.  Considering (\ref{N2}) and (\ref{N3}) modulo 3 we get $q^{y_2} \congruent 2$, $q^{y_3} \congruent 1 \bmod 3$, $2 \mid y_3$, $q^{y_3} \congruent 1 \bmod 8$, $x_3 = 2$, $q=3$, a contradiction.   So $x_1>2$.  (\ref{N2}) requires $q \congruent 7 \bmod 8$ with $\Legendre{c}{q} = 1$; but then (\ref{N3}) gives $\Legendre{c}{q} = -1$, a contradiction.

\bigskip\bigskip

\noindent {\it Case 9}
\begin{equation}    p^{x_1} + (-1)^w = c   \label{R1} \end{equation}
\begin{equation}    p^{x_2} + q^{y_2} = c   \label{R2} \end{equation}
\begin{equation}    p^{x_3} + q^{y_3} = c,    \label{R3} \end{equation}
where $w \in \{0,1\}$.  This case can be handled using essentially the same method as used to handle the case (31) in Theorem 7 of \cite{ScSt}.

\bigskip\bigskip

\noindent {\it Case 10}
\begin{equation}    1 + 1 = 2   \label{S1} \end{equation}
\begin{equation}    q^{y_2} + 2 = p^{x_2}   \label{S2} \end{equation}
\begin{equation}    q^{y_3} + 2 = p^{x_3}    \label{S3} \end{equation}
By Theorem 6 of \cite{ScSt} we cannot have both (\ref{S2}) and (\ref{S3}).

\bigskip\bigskip

\noindent {\it Case 11}
\begin{equation}    1+1=2   \label{T1} \end{equation}
\begin{equation}    q^{y_2} + 2 = p^{x_2}   \label{T2} \end{equation}
\begin{equation}    p^{x_3} + 2 = q^{y_3}    \label{T3} \end{equation}
First suppose $p \congruent q \congruent 7 \bmod 8$.  Then (\ref{T2}) and (\ref{T3}) give $\Legendre{q}{p} = \Legendre{p}{q} = -1$, impossible when $p \congruent q \congruent 3 \bmod 4$.  

Now consideration modulo 8 with consideration modulo 3 shows that one of (\ref{T2}) or (\ref{T3}) must be of the form $x^2 + 2 = 3^{n}$ for some integers $x>1$ and $n>1$; by Lemma 2 the only possibility is $(p,q,c) = (3,5,2)$ or $(5,3,2)$ which has been excluded.

\bigskip\bigskip 

Now we consider cases of three or more solutions to (\ref{1c}) with at least two solutions in which $\min(x,y)=0$.  Clearly there are at most two solutions with $\min(x,y)=0$.  Take $\delta \in \{ 0,1\}$.  If $\min(p,q)=2$, then $c$ is odd so that the only possibility allowing two solutions with $\min(x,y)=0$ is $c=3$, and we have 
\begin{equation}  2+1 = 3, 2^2 - 1 = 3, - 2^{x_3} + q^{y_3} = (-1)^\delta 3. \label{UV1} \end{equation}
If $c=2$ we have the possibility of the following three solutions:  
\begin{equation} 1+1=2, 3-1=2, -3^{x_3} + q^{y_3} = (-1)^\delta 2. \label{UV2} \end{equation} 
If $\delta = 0$ in either (\ref{UV1}) or (\ref{UV2}) then $y_3 = 1$, by Lemma 2 of \cite{ScSt}.  

Now assume $\delta = 1$.  In (\ref{UV1}) $x_3 > 2$ and consideration modulo 8 gives $y_3$ odd.  So taking $w = z = 1$, we have
$$ (-q)^{y_3} + 2^{x_3} w^{y_3} = 3 z^2, $$
from which we find that $y_3$ has no prime factor greater than or equal to 7 by Theorem 1.2 of \cite{BS}.  Assume $y_3 > 1$ and recall $y_3$ odd in (\ref{UV1}).  Then taking $g \in \{3,5\}$, we are left with the Thue equations
$$ x^g - 2^k y^g = -3, $$
where $0 < k < g$ is chosen so that $x_3 \congruent k \bmod g$ (the case $k=0$ is clearly impossible); the solutions to these Thue equations can be found using the PARI/GP command thue (see  \cite{Pari}), yielding only the single relevant case $(p,q,c) = (2,5,3)$, which has been excluded.  

If $y_3 > 1$ in (\ref{UV2}), then again $\delta = 1$ and Lemma 2 of this paper shows that $y_3$ is odd (recall (\ref{I})), so that, taking $w=z=1$ we have
$$ (-q)^{y_3} + 3^{x_3} w^{y_3} = 2 z^3, $$
from which we find that $y_3$ has no prime factor greater than 3 by Theorem 1.5 of \cite{BVY}.  So $3 \mid y_3$, so that, considering (\ref{UV2}) modulo 9, we get $x_3 = 1$, impossible.  

So $y_3 = 1$ in both (\ref{UV1}) and (\ref{UV2}), and we obtain the last two exceptions in the formulation of Theorem 3.  

Assume neither (\ref{UV1}) nor (\ref{UV2}) holds.  Then, in considering cases of three or more solutions to (\ref{1c}) with at least two solutions in which $\min(x,y) = 0$, 
we can assume that $\min(p,q) > 2$ and also that no solution has $x=y=0$.  Thus it remains to consider  
\begin{equation*}    p^{x_1} =  c +(-1)^w   \end{equation*}
\begin{equation*}    q^{y_2} = c - (-1)^w   \end{equation*}
\begin{equation*}    (-1)^u p^{x_3} + (-1)^v q^{y_3} = c     \end{equation*}
where $\min(x_1, y_2, x_3, y_3) > 0$, $u,v,w \in \{0,1\}$, and $\min(p,q) > 2$.  If $(u,v) = (0,0)$, then 
$$\frac{c + (-1)^w}{p} + \frac{c-(-1)^w}{q} = p^{x_1 - 1} + q^{y_2 - 1} \ge p^{x_3} + q^{y_3} = c,$$
impossible when $\min(p,q)>2$.  So it suffices to consider only the two cases given below by (\ref{C1}), (\ref{C2}), (\ref{C3}), and (\ref{D1}), (\ref{D2}), (\ref{D3}).

\bigskip\bigskip

\noindent {\it Case 12}
\begin{equation}    p^{x_1} + 1  = c   \label{C1} \end{equation}
\begin{equation}    1+ c = q^{y_2}   \label{C2} \end{equation}
\begin{equation}    q^{y_3} + c = p^{x_3}    \label{C3} \end{equation}
where $p$ and $q$ are odd primes. 
 
From (\ref{C1}) and (\ref{C2}) we have
 \begin{equation}  \Legendre{c}{p}  = 1  \label{C4} \end{equation}  
and 
\begin{equation} \Legendre{c}{q} = \Legendre{-1}{q}. \label{C5} \end{equation}
From (\ref{C1}) and (\ref{C2}) we see that $p$ and $q$ cannot both be congruent to $1 \bmod 4$.  Considering the remaining possibilities for $p$ and $q$ modulo 4, we see that (\ref{C4}) and (\ref{C5}) are incompatible with (\ref{C3}) when $2 \notmid x_3 y_3$.  And substituting (\ref{C1}) into (\ref{C3}) and applying Lemma 4, we see that $x_3$ and $y_3$ cannot both be even.  So $2 \notmid x_3-y_3$.  Assume $2 \mid y_3$.  
Then combining (\ref{C4}) and (\ref{C3}) we see that $p \congruent 1 \bmod 4$, so that (\ref{C1}) gives $c \congruent 2 \bmod 4$   while (\ref{C3}) gives $c \congruent 0 \bmod 4$.   So we are left with $2 \mid x_3$ and $2 \notmid y_3$.  From (\ref{C5}) and (\ref{C3}) we now obtain $q \congruent 1 \bmod 4$, so that $c \congruent 0 \bmod 4$ and, from (\ref{C1}), $p \congruent 3 \bmod 4$ with $x_1$ odd.  If $2 \mid y_2$, then, since $2 \notmid x_1$, $2 \mid x_3$, and $2 \notmid y_3$, we have 
$$ v_2(c) = v_2(q^{y_2} - 1) > v_2(q^{y_3} - 1) =v_2(p^{x_3} - 1) > v_2(p^{x_1} +1) = v_2(c),$$  
a contradiction.  So we have
\begin{equation}  2 \notmid x_1, 2 \notmid y_2, 2 \mid x_3, 2 \notmid y_3.    \label{C6}  \end{equation}  
If $3 \notmid pq$, then $3 \mid c$ and $p \congruent 2 \bmod 3$.  So now we have $p \congruent 11 \bmod 12$ so that
$$ \frac{p-1}{2} \congruent 5 \bmod 6 $$
and there must be an odd prime $r$ dividing $p-1$ such that $r \congruent 2 \bmod 3$.  We have $p^{x_1} \congruent p^{x_3} \congruent 1 \bmod r$, $c \congruent 2 \bmod r$, $q^{y_2} \congruent 3 \mod r$, $q^{y_3} \congruent -1 \bmod r$.  But since $2 \mid y_3-y_2$, we must have
$$ \Legendre{3}{r} = \Legendre{-1}{r},   $$
which is impossible when $r \congruent 2 \bmod 3$.  

So $3 \mid pq$ and, recalling $q \congruent 1 \bmod 4$, we are left with $p=3$.  We recall (\ref{C6}) and consider (\ref{C1}), (\ref{C2}), and (\ref{C3}) modulo 5.  $p^{x_1} \congruent \pm 2 \bmod 5$.  If $p^{x_1} \congruent 3 \bmod 5$ then, using (\ref{C1}) and (\ref{C2}), we get $3^{x_1} + 2 = 5^{y_2}$ so that Theorem 3 of \cite{Sc} gives $x_1 = y_2 = 1$, $c=4$, which has been excluded.  So $p^{x_1} \congruent 2 \bmod 5$, $c \congruent 3 \bmod 5$, $q^{y_2} \congruent q^{y_3} \congruent 4 \bmod 5$, so that (\ref{C3}) requires $p^{x_3} \congruent 2 \bmod 5$, contradicting $2 \mid x_3$  as in (\ref{C6}).

\bigskip\bigskip

\noindent {\it Case 13}
\begin{equation}    1+c = p^{x_1}   \label{D1} \end{equation}
\begin{equation}    q^{y_2} + 1 = c   \label{D2} \end{equation}
\begin{equation}    q^{y_3} + c = p^{x_3}    \label{D3} \end{equation}
where $p$ and $q$ are odd primes.  

Substituting (\ref{D1}) into (\ref{D3}) and applying Lemma 3 we find that we can assume $y_3$ is odd, since the exceptional cases of Lemma 3 make (\ref{D2}) impossible since $c \le 4$ and $q \ge 5$.  Substituting (\ref{D2}) into (\ref{D3}) and applying Lemma 5, we find that we can assume $x_3$ is odd.  So 
\begin{equation} 2 \notmid x_3, 2 \notmid y_3. \label{D3.5} \end{equation}
We have
\begin{equation}  \Legendre{c}{q} = 1 \label{D4} \end{equation}  
and
\begin{equation} \Legendre{c}{p} = \Legendre{-1}{p}. \label{D5} \end{equation} 
If $2 \mid x_1$, we have $4 \mid c$, $q \congruent 3 \bmod 4$, and, from (\ref{D3}) and (\ref{D3.5}), $p \congruent 3 \bmod 4$.  Combining (\ref{D4}) with (\ref{D3}) we get $\Legendre{p}{q} = 1$, while combining (\ref{D5}) with (\ref{D3}) we get $\Legendre{q}{p}=1$, which is impossible when $p \congruent q \congruent 3 \bmod 4$.  So $2 \notmid x_1$.  

Therefore, if $3 \mid c$, (\ref{D1}) gives $p \congruent 1 \bmod 3$.  But (\ref{D2}) gives $q  \congruent 2 \bmod 3$, and, from (\ref{D3}) and (\ref{D3.5}), we have a contradiction.  So $3 \notmid c$.  

So $3 \mid pq$.  If $q=3$ then, from (\ref{D2}), we get $c \congruent 1 \bmod 3$, and, from (\ref{D1}) we get $p \congruent 2 \bmod 3$.  But then (\ref{D3}) requires $2 \mid x_3$, contradicting (\ref{D3.5}).  So $p=3$.  

To handle the case $p=3$, we use Lemma~\ref{Lem:M} with the following substitutions:  $a=3$, $b=q$, $x=x_3$, $y=y_3$.  Then by Lemma~\ref{Lem:M} (noting $c > 1$) we must have either (\ref{M1}) or (\ref{M2}).  Combining (\ref{D1}) and (\ref{D3}) we get 
$$ 3^{x_1} (3^{x_3-x_1} - 1) = q^{y_3} - 1  $$
so that 
\begin{equation}  3^{x_1} \mid q^{y_3} - 1.  \label{D6} \end{equation}
From (\ref{D1}) and (\ref{D2}) we get  $x_1 > 1$, so that $q^{y_2} \congruent 7 \bmod 9$, $q \not\congruent \pm 1 \bmod 9$.  Applying Lemma 1 of \cite{ScSt2} to (\ref{D6}),  we have
\begin{equation}  3^{x_1 - 1} \mid y_3. \label{D7} \end{equation} 
Using (\ref{D7}) and (\ref{D1}) and noting that if (\ref{2m}) holds for $G = G_1 > G_0 > 1$ it holds for $G = G_0$, we see that (\ref{M1}) and (\ref{M2}) can be replaced by  
\begin{equation}  \frac{3^{x_1-1} }{\log(3)} < 2409.08 \label{D8} \end{equation} 
and 
\begin{equation} \frac{3^{x_1-1} }{\log(3)} < \frac{ 2 x_1 }{\log(q)} + 22.997 ( (x_1-1) \log 3 - \log\log 3 + 2.405)^2,      \label{D9} \end{equation}  
giving $x_1 \le 8$.  Using (\ref{D1}), (\ref{D2}), (\ref{D3}), and (\ref{D3.5}) we have
\begin{equation} 3^{x_1} -2 = q^{y_2}, 3^{x_3} \congruent 1 \bmod q, 2 \notmid x_3. \label{D10} \end{equation}
We easily check that (\ref{D10}) is impossible for $x_1 =3$, 5, or 7 (recall $2 \notmid x_1 > 1$).

\bigskip\bigskip

We have now shown that the list of exceptional cases in Theorem 3 includes all $(p,q,c)$ allowing at least three solutions to (\ref{1c}).  It remains to show that for each such $(p,q,c)$ the list of solutions $(x,y)$ is complete.  

Consider first $(p,q,c) = (2, 2^t + (-1)^\delta 3, 3)$ which gives the three solutions 
$$ 2+1 = 2^2 - 1 = - (-1)^\delta 2^{x_3} + (-1)^\delta q^{y_3} =3,$$
where $y_3=1$ and $x_3 = t>1$.  If $q = 2^t + 3$, then we cannot have $q^{y_4} + 3 = 2^{x_4}$ since $q \congruent 3 \bmod 4$.  So any further solution $(x_4, y_4)$ must be of the form $2^{x_4} + 3 = q^{y_4}$ with $y_4$ odd so that $q^{y_4} \congruent q^{y_3} \bmod 3$, giving $2 \mid x_3 - x_4$, contradicting Theorem 3 of \cite{Sc}, so that there exactly three solutions in this case.  Similarly, the case $q = 2^t - 3$ gives exactly three solutions (note that $t$ is defined so that $q \ne 5$).  

Now consider $(p,q,c) = (3, 3^n +(-1)^\delta 2, 2)$ which gives the three solutions
$$ 1 + 1 = 3 - 1 = -(-1)^\delta 3^{x_3} + (-1)^\delta q^{y_3} = 2. $$
By the results given in Cases 10 and 11, this case also has exactly three solutions (except for the excluded case $(3,5,2)$).  

The remaining cases can be handled either by Theorem 2 of \cite{Sc} or by Observation 8 of \cite{ScSt2}.  
\end{proof}

The proof of Theorem 3 is elementary except for the use of Lemma~\ref{Lem:M} (to handle the case $p=3$ in Cases 2 and 13), Corollary 1.7 of \cite{Be} (to handle the case $p=3$ in Case 7), Lemma 7 (Case 4), Lemma 12 of \cite{ScSt4} (Cases 2 and 4), Observation 8 of \cite{ScSt2} (at the end of the proof of Theorem 3), and, finally, Theorem 1.2 of \cite{BS}, Theorem 1.5 of \cite{BVY}, and Pari (to obtain $y_3=1$ in (\ref{UV1}) and (\ref{UV2})).  The following Lemma~\ref{Lem:MF} allows us to replace Observation 8 by an elementary result, and the Corollary to Lemma~\ref{Lem:MF} shows that Lemma 12 of \cite{ScSt4} can be given an elementary proof; also the somewhat longer alternate proof of Case 4 of Theorem 3 given below removes the dependence on Lemma 7, thus removing the dependence on \cite{BB}.  Finally, rewriting the last two exceptional $(p,q,c)$ in the formulation of Theorem 3 as $(3, (3^n + (-1)^\delta 2)^{1/m}, 2)$ and $(2, (2^t +(-1)^\delta 3)^{1/m}, 3)$ where $m \ge 1$ is an integer, we can remove the need for Theorem 1.2 of \cite{BS}, Theorem 1.5 of \cite{BVY}, and Pari.  With these changes the proof of Theorem 3 is lengthened but becomes elementary except for three applications (all with $\min(p,q)=3$) of lower bounds on linear forms in logarithms (note that Corollary 1.7 of \cite{Be} and Lemma~\ref{Lem:M} both use a theorem of Mignotte \cite{Mi} as used in \cite{Be}).

\begin{Lemma}  
\label{Lem:MF}

If $(p,q,c) = (2, M, M+2)$ where $M = 2^t -1 >3$ is a Mersenne prime, the only solutions to (\ref{1c}) are
\begin{equation}  2^t + 1 = c, \label{MFA1} \end{equation} 
\begin{equation}  2 + M = c,  \label{MFA2}  \end{equation} 
\begin{equation}  2^{t+1} - M = c.  \label{MFA3}  \end{equation} 

If $(p,q,c) = (2, M, 2M + 1)$ where $M = 2^t -1 >3$ is a Mersenne prime, the only solutions to (\ref{1c}) are
\begin{equation}  2^{t+1} - 1 = c  \label{MFB1}  \end{equation} 
\begin{equation}  2^{t} + M = c  \label{MFB2} \end{equation} 
\begin{equation}  2^{2t} - M^2 = c  \label{MFB3}  \end{equation} 

If $(p,q,c) = (2, F, F-2)$ where $F = 2^t +1 >5$ is a Fermat prime, the only solutions to (\ref{1c}) are
\begin{equation}  2^t - 1 = c  \label{MFC1}  \end{equation} 
\begin{equation}  -2 + F = c  \label{MFC2}   \end{equation} 
\begin{equation}  2^{t+1} - F = c  \label{MFC3} \end{equation}  

If $(p,q,c) = (2, F, 2F-1)$ where $F = 2^t +1 >5$ is a Fermat prime, the only solutions to (\ref{1c}) are
\begin{equation}  2^{t+1} + 1 = c  \label{MFD1}  \end{equation} 
\begin{equation}  2^t + F = c  \label{MFD2}   \end{equation} 
\begin{equation}  -2^{2t} + F^2 = c \label{MFD3} \end{equation} 

\end{Lemma}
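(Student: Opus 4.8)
The plan is to show, for each of the four families, that any putative fourth solution $(x_4,y_4)$ leads to a contradiction via elementary congruence and valuation arguments, exploiting the fact that $c$ is odd in every case (so there is at most one solution with $\min(x,y)=0$, namely the two ``basic'' solutions with a power of $2$ fixed) and that $q \equiv 3 \bmod 4$ for Mersenne primes while $q\equiv 1\bmod 4$ for Fermat primes. First I would fix one family, say $(p,q,c)=(2,M,M+2)$, verify directly that (\ref{MFA1})--(\ref{MFA3}) are indeed solutions (a trivial check: $2^t+1=M+2$, $2+M=M+2$, $2^{t+1}-M=2(M+1)-M=M+2$), and then suppose a fourth solution exists. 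Since $c=M+2$ is odd, any solution has $\min(x,y)>0$ except possibly one; the solutions (\ref{MFA1}) and (\ref{MFA3}) already have $u=0$ and a power of $2$ that is not $2^0$, and (\ref{MFA2}) has $x=1$, so a fourth solution must be genuinely new. I would split on the sign pattern $(u,v)$: the form $M^{y_4}+M+2 = 2^{x_4}$ is killed modulo $4$ because $M\equiv 3\bmod 4$ forces $M^{y_4}+M+2 \equiv 2$ or $0 \bmod 4$ in a way incompatible with a high power of $2$ unless $x_4\le 2$, which is too small; the form $2^{x_4}+M+2 = M^{y_4}$ forces $M\mid 2^{x_4}+2$, i.e. $2^{x_4}\equiv -2\bmod M$, and combined with the already-known solution this pins $x_4$ into a fixed residue class modulo $\mathrm{ord}_M(2)=t$ that contradicts $2\notmid x_3-x_4$ coming from Theorem~3 of \cite{Sc} (using $y_3$ and $y_4$ odd, which follows from reduction modulo a small prime); the remaining mixed forms $\vert 2^{x_4}-M^{y_4}\vert = M+2$ fall to Theorem~2 of the present paper or Theorem~2 of \cite{Sc} once we observe that $(2,M,M+2)$ together with a fourth solution of this shape would give three solutions to $\vert 2^x - M^y\vert = c$ in positive exponents, excluded there.

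For the family $(2,M,2M+1)$ the same skeleton applies, but now (\ref{MFB3}) is $2^{2t}-M^2 = (2^t-M)(2^t+M) = 1\cdot(2^{t+1}-1) = 2M+1$, and the key extra observation is that $2M+1 = 2^{t+1}-1$ is itself a Mersenne-type number, so the basic forms are $2^{t+1}-1$ and $2^t+M$; a fourth solution of the form $2^{x_4}+M^{y_4}=2M+1$ with both exponents positive would produce, together with (\ref{MFB2}), two solutions to a Pillai equation with $(u,v)=(0,0)$ plus the basic form, which is exactly the configuration ruled out in Case~9 / Theorem~7 of \cite{ScSt} after checking the exponent parities modulo $3$ and modulo $8$. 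For the two Fermat families $(2,F,F-2)$ and $(2,F,2F-1)$ with $F\equiv 1\bmod 4$, the roles of the congruences flip: here it is the form $2^{x_4}+F^{y_4}=c$ that is constrained, $F\mid 2^{x_4}\mp 2$ pins $x_4$ modulo $\mathrm{ord}_F(2)=2t$, and the square solution $-(F-1)^2+F^2 = 2F-1$ (resp.\ $(F-1)-1 = F-2$) must be handled by noting $F-1 = 2^t$ is a prime power so a fourth solution with $2\mid y_4$ would contradict Lemma~\ref{L4} or Lemma~\ref{L3}. In all four families I would finish by invoking Theorem~2 of \cite{Sc} (which gives $N=2$ for $\vert p^x - q^y\vert = c$ with positive exponents under restriction (A.)) to kill any fourth solution that has $\min(x,y)>0$ and opposite signs, leaving only the same-sign or mixed cases already disposed of.

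The main obstacle I anticipate is the bookkeeping in separating, for each of the four values of $c$, precisely which sign patterns $(u,v)$ and which parities of the new exponents are still live after the basic forms are accounted for — in particular making sure that the appeal to Theorem~3 of \cite{Sc} (odd difference of exponents for two same-sign solutions) is legitimate, which requires first establishing $y_3\equiv y_4 \bmod 2$ via a small-prime reduction, and that none of the four $(p,q,c)$ is itself one of the exceptional triples of Theorem~3 of \cite{Sc} or Theorem~2 above (one must check $F-2$, $2F-1$, $M+2$, $2M+1$ against the finite exceptional list, and handle the genuinely infinite sub-families $(2,F,F-2)$, $(2,M,M+2)$, etc., by the structural argument above rather than by a table). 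A secondary nuisance is the edge behaviour when $t$ is small (e.g.\ $M=7$, $F=17$), where some of the ``new'' forms collide with basic forms and must be excluded by hand; this is routine but needs to be stated. Once those parity-and-exclusion points are nailed down, each remaining subcase collapses in one or two lines to either a Jacobi-symbol contradiction, an order-of-$2$ contradiction, or a citation of Lemma~\ref{L3}, Lemma~\ref{L4}, Theorem~2, or Theorem~7 of \cite{ScSt}.
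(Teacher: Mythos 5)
Your overall strategy --- a sign-pattern case analysis using congruences, quadratic residues, orders of $2$, and Theorem 3 of \cite{Sc} to force exponent parities --- is the same elementary route the paper takes, but two of your concrete steps fail. First, for $(p,q,c)=(2,M,M+2)$ you claim the form $M^{y_4}+c=2^{x_4}$ is ``killed modulo 4.'' It is not: this form contains one of the three solutions you are cataloguing, namely $M+ (M+2)=2^{t+1}$, i.e.\ (\ref{MFA3}), and reduction modulo $4$ only forces $y_4$ odd. What is needed here is exactly the paper's step: reduction modulo $8$ pins the parity of $y$, and then Theorem 3 of \cite{Sc} guarantees that two distinct positive solutions of $M^y+c=2^x$ would require opposite parities of $y$, so (\ref{MFA3}) is the unique one. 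Conversely, the form that really is impossible is $2^{x_4}+c=M^{y_4}$; the paper dispatches it in one line from $\Legendre{c}{M}=1$ and $\Legendre{-1}{M}=-1$ (since $M\equiv 7\bmod 8$), whereas your account of it (``combined with the already-known solution this pins $x_4$\dots'') refers to a known solution of that shape which does not exist. You have in effect swapped which sign pattern is trivially impossible and which one carries the third solution.

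Second, your plan to dispose of mixed-sign fourth solutions by citing Theorem 2 of the present paper collapses precisely for the family $(2,F,F-2)$: that triple sits on Theorem 2's exception list (all three of (\ref{MFC1})--(\ref{MFC3}) are of the form $\vert 2^x-F^y\vert=c$), so Theorem 2 gives no bound there at all. You acknowledge that the infinite sub-families must be handled ``by the structural argument above,'' but for the Fermat cases the only concrete ingredient you supply is the observation $\ord_F(2)=2t$, which by itself pins $x_4$ to a residue class and still requires a size comparison to conclude. The paper instead argues each sign pattern directly: for $(2,F,F-2)$, reduction modulo $8$ forces $x=1$ in $2^x+c=F^y$, and reduction modulo $2^{t+1}$ forces $y$ odd in $F^y+c=2^x$ so that Theorem 3 of \cite{Sc} leaves only (\ref{MFC3}); for $(2,F,2F-1)$ one combines reductions modulo $3$, modulo $8$, and modulo $2^{t+1}$ with the inequality $F<2^t+c<F^2$. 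Until the parity claim in the Mersenne case is corrected and the Fermat families are given a self-contained argument not routed through Theorem 2, the proof is incomplete.
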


\begin{proof} 
Let $M = 2^t - 1 > 3$ be a Mersenne prime and let $c$ be either $2^t + 1$ or $2^{t+1} - 1$.  Then $\Legendre{c}{M} = 1$ and the equation $2^x + c = M^y$ is impossible.  Considering the equation $M^y + c = 2^x$ modulo 8, we see that the parity of $y$ is determined, so, by Theorem 3 of \cite{Sc}, the only solutions to this equation with $y>0$ are given by (\ref{MFA3}) and (\ref{MFB3}) respectively.  Further, it is easily seen that the only cases of solutions to the equation $2^x + M^y = c$ with $y>0$ are given by (\ref{MFA2}) and (\ref{MFB2}) respectively.  And clearly the only solutions with $\min(x,y) = 0$ are given by (\ref{MFA1}) and (\ref{MFB1}) respectively.  

Now let $(p,q,c) = (2, F, F-2)$ where $F = 2^t +1 >5$ is a Fermat prime.  Consideration modulo 8 shows that the equation $2^x + c = F^y$ requires $x=1$ giving (\ref{MFC2}).  Consideration modulo $2^{t+1}$ shows that the equation $F^y + c = 2^x$ requires $y$ odd when $y>0$, so, by Theorem 3 of \cite{Sc}, we  must have (\ref{MFC3}).  Clearly there can be no solutions to the equation $2^x + F^y = c$ with $y>0$.  Finally, the only solution for this $(p,q,c)$ with $\min(x,y)=0$ is given by (\ref{MFC1}).    

Now let $(p,q,c) = (2, F, 2F-1)$ where $F = 2^t +1 >5$ is a Fermat prime.  Consideration modulo 3 shows that the equation $2^x + c = F^y$ requires $2 \mid x-y$; when $x$ and $y$ are odd, consideration modulo $2^{t+1}$ shows that we must have $x=t$, which is impossible since $F < 2^t + c < F^2$, and, if $x$ and $y$ are even, the only solution is given by (\ref{MFD3}) by Theorem 3 of \cite{Sc}.  
Consideration modulo 8 shows that the equation $F^y + c = 2^x$ is impossible.  Clearly the only solution to the equation $2^x + F^y = c$ with $y>0$ is given by (\ref{MFD2}).  And it is also clear the only possible solution with $\min(x,y)=0$ is given by (\ref{MFD1}).
\end{proof}

\begin{Corollary}
Lemma 12 of \cite{ScSt4} has an elementary proof.  
\end{Corollary}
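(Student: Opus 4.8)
The plan is to re-examine the proof of Lemma 12 in \cite{ScSt4} and show that its one non-elementary ingredient can be replaced by Lemma~\ref{Lem:MF}. Lemma 12 of \cite{ScSt4} is a structural statement about a triple of solutions of (P): granted certain hypotheses relating the three solutions and the ordering of one of the pairs of exponents, it forces a corresponding ordering of the remaining exponents, apart from a listed set of exceptional $(a,b,c,r,s)$. First I would go through that proof line by line, confirming that its congruence arguments, its gap comparisons between consecutive solutions, and its appeals to the elementary results of \cite{Sc} and \cite{ScSt} are all elementary, and isolating the single place where a non-elementary tool is invoked --- namely where one needs the complete solution set of the infinite families that occur among the exceptions of Lemma 12, a determination carried out in \cite{ScSt4} by means of a lower bound for linear forms in logarithms (through Observation 8 of \cite{ScSt2}).

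The next step is the substitution. After the elementary reductions in \cite{ScSt4}, and after normalizing the bases to primitive values as permitted by the notion of family, those infinite families are the ones governed by Lemma~\ref{Lem:MF}, namely $(2,M,M+2)$, $(2,M,2M+1)$, $(2,F,F-2)$, and $(2,F,2F-1)$. For each of these Lemma~\ref{Lem:MF} lists all three solutions of (\ref{1c}) explicitly, and --- crucially --- its proof uses nothing beyond congruences modulo small powers of $2$ and $3$ together with Theorem 3 of \cite{Sc}, all of which are elementary. So for each family one reads off the three solutions from (\ref{MFA1})--(\ref{MFD3}), checks directly that the conclusion of Lemma 12 holds (or that the triple is already among the exceptions stated in Lemma 12), and thereby removes the appeal to linear forms in logarithms. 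With this replacement, every step of the proof of Lemma 12 in \cite{ScSt4} is elementary, which is the assertion of the Corollary.

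I expect the main obstacle to be verification rather than invention. One must check that the non-elementary step in \cite{ScSt4} really is confined to these families --- that no gap comparison or auxiliary bound in that proof secretly relies on a non-elementary estimate --- and one must carry out the translation between the $(a,b,c,r,s)$ parametrization of \cite{ScSt4} and the normalized $(p,q,c)$ form of Lemma~\ref{Lem:MF}, keeping track of the base identifications in the definition of a family and checking that the proof of Lemma~\ref{Lem:MF} survives the cosmetic changes needed when $q$ is replaced by a power of a Mersenne or Fermat prime. Once these checks are in place, no further computation is required.
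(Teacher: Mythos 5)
Your proposal follows essentially the same strategy as the paper: trace the dependencies of Lemma 12 of \cite{ScSt4}, isolate the non-elementary inputs, and replace them by the elementary Lemma~\ref{Lem:MF}. The paper locates those inputs slightly differently --- they occur as uses of Theorems 1 and 7 of \cite{ScSt2} inside the lemmas \emph{preceding} Lemma 12 in \cite{ScSt4} (not Observation 8 of \cite{ScSt2}, which is what Lemma~\ref{Lem:MF} replaces at the end of the proof of Theorem 3), and it replaces them by either Theorem 2 of \cite{Sc} or Lemma~\ref{Lem:MF} as the case requires --- but the substance of the argument is the one you describe.
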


\begin{proof} 
The proof of Lemma 12 of \cite{ScSt4} depends only on the lemmas preceding it in that paper, which in turn are elementary except for use of Theorems 1 and 7 of \cite{ScSt2}.  But in every case the use of Theorems 1 and 7 of \cite{ScSt2} can be replaced by the use of either Theorem 2 of \cite{Sc} or Lemma~\ref{Lem:MF} above.  
\end{proof}

The following eliminates the dependence of Case 4 of Theorem 3 on Lemma 7.  
 
\begin{proof}[Alternate Proof of Case 4 of Theorem 3]
It suffices to treat only the case $u=0$, noting $y_1 \ge 3$ and recalling $2 \mid x_3$.  If $p \congruent 7 \bmod 8$ then (\ref{F2}) requires $\Legendre{c}{p} = 1$ while (\ref{F3}) requires $\Legendre{c}{p} = -1$, so
\begin{equation} p \not\congruent 7 \bmod 8.  \label{Alt0} \end{equation}

If $y_2 = 1$ then $p^{x_2} = 2^{y_1} - 1 \congruent 7 \bmod 8$, impossible by (\ref{Alt0}), so $p^{x_2} \congruent 1 \bmod 4$.  If $2 \mid x_2$ then, using Lemma 6 with (\ref{I}) and (\ref{Alt0}), we must have $(p, c) = (11, 129)$ or $(181, 32769)$, so considering (\ref{F3}) modulo 5 we find $2 \notmid y_3$, while considering (\ref{F3}) modulo 3 we find $2 \mid y_3$ since $2 \mid x_3$.  So 
\begin{equation}  2 \notmid x_2,   \label{Alt1} \end{equation}  
and
\begin{equation}  p \congruent 1 \bmod 4.  \label{Alt1b} \end{equation} 
Assume now $4 \mid x_3$ and recall (\ref{I}).  Then, since 
\begin{equation} 2^{y_3} + 2^{y_1} + 1 = p^{x_3}, \label{Alt2} \end{equation}
consideration modulo 5 gives $2^{y_3} + 2^{y_1} \congruent 0 \bmod 5$, so that $2 \mid y_3 - y_1$.  But consideration of (\ref{Alt2}) modulo 3 gives $2 \notmid y_3-y_1$, a contradiction.  So
\begin{equation}   2 \mid\mid x_3.  \label{Alt3} \end{equation}

Let $k = v_2(p-1)$.  Then, using (\ref{Alt1}) and (\ref{Alt3}), we have
\begin{equation}  v_2(p^{x_2} - 1) = v_2(p^{ x_3/2} -1) = k. \label{Alt4} \end{equation} 
From (\ref{Alt4}) and (\ref{Alt1b}) we have $v_2(p^{x_3} - 1) = k+1$, so from (\ref{Alt2}) and (\ref{F4}) we have $y_1 = k+1$, so from (\ref{F2}) and (\ref{Alt4}) we have $y_2 = k$, $p=2^k + 1$ (note $x_2=1$ by (\ref{I})), giving the already excluded case $(p,q,c) = (F, 2, 2F-1)$.  
\end{proof}

\section{Further Related Results}  

In this section we show how Lemma~\ref{L2} can be used in a different direction, treating an old problem which has already received much attention (see Introduction).

\begin{Theorem} 
\label{Thm4}
Let $C$ be an even positive integer, and let $PQ$ be the largest squarefree divisor of $C$, where $P$ is chosen so that $(C/P)^{1/2}$ is an integer.  If the equation   
\begin{equation}x^2 + C = y^n \label{(4.1)}\end{equation}  
has a solution $(x,y,n)$ with $x$ and $y$ nonzero integers divisible by at most one prime, $(x,y)=1$, $n$ a positive integer, and $(x,y,n) \ne (7,3,4)$ or $(401,11,5)$, then we must have either $n=3$ or   
$$n | N = 2 \cdot 3^u h(-P) \langle q_1 - \Legendre{-P}{q_1}, \dots, q_n - \Legendre{-P}{q_n} \rangle$$  
Here $u=1$ or 0 according as $3 < P \congruent 3 \bmod 8$ or not, $h(-P)$ is the lowest $h$ such that $\scripta^h$ is principal for every ideal $\scripta $ in $\ratQ(\sqrt{-P})$, $\langle a_1, a_2, \dots, a_n \rangle$ is the least common multiple of the members of the set $S = \{ a_1, a_2, \dots, a_n \}$ when $S \ne \emptyset$, $\langle a_1, a_2, \dots, a_n \rangle = 1$ when $S = \emptyset$, $q_1 q_2 \dots q_n = Q$ is the prime factorization of $Q$, and $\Legendre{a}{q}$ is the familiar Legendre symbol unless $q=2$ in which case $\Legendre{a}{2} = 0$. 
\end{Theorem}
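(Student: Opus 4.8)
The plan is to pass to the imaginary quadratic field $K=\ratQ(\sqrt{-P})$, imitating and generalizing the argument behind Lemma~\ref{L2}. Write $C=Pw^{2}$, where $w^{2}=C/P$ is the integer guaranteed by the choice of $P$; every prime of $Q$ divides $w$, and the only other primes dividing $w$ are primes of $P$. Since $x$ and $y$ are prime powers with $(x,y)=1$, one checks that $\gcd(x,Pw)=1$, that $y$ is odd (otherwise $2\mid C$ would force $2\mid x$), and that the prime $\ell$ with $y=\ell^{b}$ splits in $K$: it cannot ramify (that would need $\ell\mid P$, forcing $\ell\mid x$) nor be inert (which would force $\ell\mid x$). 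Over the ring of integers $\mathcal O$ of $K$ the equation factors as $(x+w\sqrt{-P})(x-w\sqrt{-P})=y^{n}$, and the two factors generate coprime ideals (a common prime ideal would lie over a rational prime dividing both $2x$ and $y^{n}$, hence dividing $2$ or $x$, impossible), so $[x+w\sqrt{-P}]=\mathfrak c^{\,n}$ for some ideal $\mathfrak c$.

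Let $d$ be the order of the class of $\mathfrak c$, so $d\mid n$ and $d\mid h(-P)$, and write $\mathfrak c^{\,d}=[\beta]$, hence $x+w\sqrt{-P}=\varepsilon\beta^{m}$ with $m=n/d$ and $\varepsilon$ a unit. In all but the fields $\ratQ(i)$ and $\ratQ(\sqrt{-3})$ (that is, unless $P=1$ or $P=3$) we have $\varepsilon=\pm1$, and whenever $m$ is coprime to the order of the unit group we may absorb $\varepsilon$ into $\beta^{m}$, so that $x+w\sqrt{-P}=\beta^{m}$. The small residual cases coming from the unit groups of $\ratQ(i)$ and $\ratQ(\sqrt{-3})$, together with $n$ even and with the case $q=2$ below, are responsible for the factor $2$ in $N$, for the alternative conclusion $n=3$, and for the exceptional triple $(7,3,4)$; I will concentrate on the main situation $x+w\sqrt{-P}=\beta^{m}$ where $m$ has a prime factor $p\ge5$.

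Next, reduce modulo the primes $q\mid Q$. Since $q\mid w$, reducing $x+w\sqrt{-P}=\beta^{m}$ modulo $q$ gives $(\beta/\bar\beta)^{m}\equiv1\pmod q$. As $N(\beta)=\ell^{b}$ is prime to $q$, $\beta/\bar\beta$ is a norm-one unit of $(\mathcal O/q\mathcal O)^{\times}$, so its order divides $q-\Legendre{-P}{q}$ when $q$ is odd, and divides $2^{2}-1=3$ when $q=2$ is inert — this last occurring exactly when $3<P\equiv3\bmod8$, which is why the formal value $\Legendre{-P}{2}=0$ must be compensated by the factor $3^{u}$. Hence either some prime dividing $m$ divides some $q-\Legendre{-P}{q}$, contributing to the factor $\lcm_{i}\bigl(q_{i}-\Legendre{-P}{q_{i}}\bigr)$ of $N$, or else $\beta\equiv\bar\beta\pmod q$ for every $q\mid Q$, so that $Q$ divides the imaginary coefficient of $\beta$.

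It remains to treat this last case. Writing $\beta=A+B\sqrt{-P}$ (with half-integral $A,B$ allowed when $P\equiv3\bmod4$), we have $\gcd(A,B)=1$ (since $\ell$ splits, $\ell\nmid\beta$, so the real and imaginary parts of $\beta$ are coprime) and $Q\mid 2B$. Comparing imaginary parts in $x+w\sqrt{-P}=\beta^{m}$ yields
$$\pm w \;=\; B\Bigl(mA^{m-1}-\tbinom{m}{3}A^{m-3}B^{2}P+\tbinom{m}{5}A^{m-5}B^{4}P^{2}-\cdots\Bigr),$$
and the real part gives an analogous expression for $x$. At this point the argument is a generalization of the proof of Lemma~\ref{L2}: using $\gcd(A,B)=1$, the fact that $x$ and $A$ are prime powers, the known $q$-adic valuations of $w$ in terms of $P$ and $Q$, and quadratic-reciprocity computations modulo small primes (the analogues of Congruences~1 and~2 and the subsequent Jacobi-symbol descent), one forces $m=3$ — giving $n=3$ when $d=1$, and $n\mid N$ otherwise — or else $(x,y,n)=(401,11,5)$. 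This final step is the main obstacle: it is Lemma~\ref{L2} carried out with a general value of $N(\beta)=A^{2}+PB^{2}$ in place of $1+D$, and it is here that the two exceptional triples enter.
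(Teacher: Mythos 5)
Your setup---factoring $x^2+C=y^n$ in $\ratQ(\sqrt{-P})$, extracting a class-number contribution dividing $h(-P)$, and reducing $\beta/\bar\beta$ modulo the primes $q\mid Q$ to pick up the factors $q-\Legendre{-P}{q}$---is essentially the paper's framework. But there are two genuine gaps. First, the dichotomy you draw from the reduction mod $q$ (``either some prime dividing $m$ divides some $q-\Legendre{-P}{q}$, or $Q$ divides the imaginary coefficient of $\beta$'') does not yield $n\mid N$: knowing that one prime factor of $m$ divides the displayed lcm says nothing about the rest of $m$. What is needed, and what the paper does, is to take $j$ to be the \emph{least} exponent for which $Q$ divides the imaginary part of the $j$th power, show that $jk$ divides $N/2$, write $n=jkr$, and then prove $r\le 2$ outside the listed exceptions; your $m$ conflates the ``$j$'' part and the ``$r$'' part.

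Second, and more seriously, your final step---``Lemma 2 carried out with a general value of $N(\beta)=A^{2}+PB^{2}$ in place of $1+D$''---is precisely the part that cannot be done as stated, and you flag it yourself as the main obstacle. The proof of Lemma~\ref{L2} depends throughout on the base being $1+\sqrt{-D}$: the norm $1+D$ being prime or a prime power, the congruences modulo $D-3$ and $D+1$, and the Jacobi-symbol descent in the variable $y=D+1$ all collapse for a general real part $A$. The paper's key move, which your proposal is missing, is to prove first that the real part $u$ of the relevant power equals $\pm1$: if $r$ is even, every prime dividing $u$ must divide $C$, and $(u,C)=1$ forces $u=\pm1$; if $r$ is odd, $u$ divides $x$ and hence is a power of the single prime dividing $x$, whereupon Theorem 1 of \cite{Sc} forces $r=1$ unless $u=\pm1$. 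Only after this reduction does Lemma~\ref{L2} apply (namely when the imaginary coefficient $w$ equals $1$); when $w>1$ the paper does not generalize Lemma~\ref{L2} at all, but instead uses Lemmas 1--3 of \cite{Sc} to produce a prime $r_1\mid\gcd(r,C)$ and analyzes the binomial expansion of $(1+\sqrt{-D})^{r_1}$ directly for $r_1=2,3,5$ and $r_1\ge7$; it is this case analysis that produces the exceptional triples $(7,3,4)$ and $(401,11,5)$. Without the $u=\pm1$ reduction and that analysis, your argument does not close.
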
 
  
\begin{proof}
  It suffices to prove the theorem for the case in which $y$ is a positive prime.  Assume there exists a solution to (\ref{(4.1)}).  Let $\scriptp \bar{\scriptp}$ be the prime ideal factorization of $y$ in $\ratQ(\sqrt{-P})$.  Let $k$ be the smallest number such that $\scriptp^k = [\alpha]$ is principal with a generator $\alpha$ having integer coefficients.  When $P=1$, we choose $\alpha$ so that the coefficient of its imaginary term is even.  When $P = 3$ we can take $k=1$. Then  
$$\alpha^{n/k} = \pm x \pm \sqrt{-C}$$  
where the $\pm$ signs are independent.  Note that when $P=3$ and $\alpha^{n/k} \epsilon = x \pm \sqrt{-C} $ for some unit $\epsilon$, we must have $\epsilon = \pm 1$.  Let $j$ be the least number such that $\alpha^j = u + v Q \sqrt{-P}$ for some integers $u$ and $v$.  By elementary properties of the coefficients of powers of integers in a quadratic field, $jk | N/2$.  Also, $jk | n = jkr$ for some $r$.    
So we have  
$$(u + v Q \sqrt{-P})^r = \pm x \pm \sqrt{-C} $$  
If $r=1$ or $r=2$, Theorem~\ref{Thm4} holds, so assume $r \ge 3$.    
  
If $r$ is even, then any prime dividing $u$ must divide $C$, since $\pm x \pm \sqrt{-C}$ must be divisible by $( u  + v Q \sqrt{-P})^2.$  Since $(u, C) = 1$, we must have $u = \pm 1$ when $r$ is even.    
  
If $r$ is odd, then $u$ divides $x$.  $x = \pm 1$ implies $u = \pm 1$.  Assume $| x | > 1$.  Let $x = \pm g^s$ where $g$ is a positive prime and $s > 0$.  Then, when $r$ is odd, $u = \pm g^t$ for some $t \ge 0$.  Also, every prime dividing $v$ divides $C$.  Thus, if $t>0$, then by Theorem 1 of \cite{Sc}, $r=1$ which we already excluded.   (Note that the only relevant exceptional case in Theorem 1 of \cite{Sc} is $(x,y,C) = (3,13,10)$, in which case $n = 1$ or $3$.)   
  
So $u = \pm 1$ regardless of the value of $x$ or the parity of $r$.  Letting $D = v^2 Q^2 P$, we have  
$$(1+ \sqrt{-D})^r = \pm x \pm w \sqrt{-D} $$  
for some positive integer $w$.  If $w=1$, we see from Lemma~\ref{L2} that $r=3$ and $j = k = 1$, so that $n = 3$ and the theorem holds.    
  
So $w>1$, and $w$ is divisible only by primes dividing $C$.  In what follows, we apply Lemmas 1--3 of \cite{Sc}.  We must have at least one prime $r_1$ dividing $C$ which also divides $r$.  We have, for any such $r_1$,  
\begin{equation}(1+\sqrt{-D} )^{r_1} = \pm x_1 \pm w_1 \sqrt{-D} \label{(4.2)}\end{equation}  
where $w_1 | w$.  If $r_1$ is odd, we have  
\begin{equation} \pm w_1= r_1 - {r_1 \choose 3} D + {r_1 \choose 5} D^2 - \dots \pm D^{{r_1 -1 \over 2}}. \label{(4.3)}\end{equation}   
$r_1 | w_1$, and, if $r_1 > 3$, then $r_1^2 \notmid w_1$.  Also, when $r_1 > 3$, $(w_1 / r_1, C) =1$, so that $w_1 = \pm r_1$.    
  
If $r_1 = 3$, we must have $w_1 = \pm 3^z$ for some $z > 0$ so that $D= 3^z + 3$.  Now $1+D$ is the norm of $\alpha^{j}$ which equals $y^{jk}$.  But $1+D = 3^z + 4$ cannot be a perfect power of $y$ by Lemma 2 of \cite{ScSt}.  So $j=k=1$.  Now $| x_1 | = 3D-1 > 1$.  Also, $(x_1, C) = 1$ so $2  \notmid \frac{r}{r_1}$.  Thus, $x_1$ must be a power of the prime dividing $x$ (this follows from the same kind of elementary reasoning used for Lemmas 1--3 of \cite{Sc}).  By Theorem 1 of \cite{Sc}, $r= r_1$, $n = 3jk = 3$, and the theorem holds.    
  
If $r_1 = 5$ then (\ref{(4.3)}) shows that $\pm 5 = 5 - 10 D + D^2$.  Since $5 | D$, this implies $D=10$, $y^{jk}=11$ which gives $(x_1, y, r_1, j,k) = (401, 11, 5, 1, 1)$.  If $r> r_1$, we must have $2 \notmid r$ and $401 | x$, so Theorem 1 of \cite{Sc} shows $r=r_1$. This leads to the case $(x,y,n) = (401, 11, 5)$.    
  
If $r_1 \ge 7$, (\ref{(4.3)}) is impossible for $w_1 = \pm r_1$.   
  
Finally, it remains to consider $r = 2^h$, $h > 1$.  Then we have (\ref{(4.2)}) with $r_1 = 2$, $| x_1 | = D-1$.  If $D > 2$, then, since $D-1>1$, we have $2 \notmid \frac{r}{r_1}$, contradicting $h>1$.     
So $D=2$, so that $y^{jk} = 1+ D = 3$, and $n = r = 2^h$.   $n = 4$ gives the exceptional case $(x,y,n) = (7,3,4)$; and $n>4$ gives $7 \mid w$, impossible.    
\end{proof}

\bigskip

Author addresses:  

Reese Scott, 86 Boston Street, Somerville, MA 02143

Robert Styer, Villanova University, Department of Mathematics and Statistics, 800 Lancaster Avenue, Villanova, PA  19085 \quad  robert.styer@villanova.edu

\end{document}